\documentclass[a4paper,reqno]{amsart}

\usepackage{fullpage}
\usepackage{verbatim}
\usepackage{amsfonts}
\usepackage{mathrsfs}
\usepackage{stmaryrd}
\usepackage{graphicx}
\usepackage{mathdots}
\usepackage{float}
\usepackage[dvipsnames]{xcolor}
\usepackage[colorlinks = true, linkbordercolor = {white}, urlcolor={purple}, linkcolor={purple}, citecolor = {purple}]{hyperref}
\usepackage{cleveref}
\usepackage{soul}
\usepackage{enumitem}
\usepackage{amssymb}
\usepackage{hyperref}
\usepackage[foot]{amsaddr}
\usepackage{caption}
\usepackage{microtype}

\theoremstyle{plain}
    \newtheorem{Thm}{Theorem}[section]
    
    \newtheorem{Lem}[Thm]{Lemma}

\theoremstyle{definition}
    \newtheorem{Def}[Thm]{Definition}

\theoremstyle{remark}
    
\theoremstyle{remark}

\Crefname{Thm}{Theorem}{Theorems}
\Crefname{Prop}{Proposition}{Propositions}
\Crefname{Lem}{Lemma}{Lemmas}
\Crefname{Cor}{Corollary}{Corollaries}
\Crefname{Def}{Definition}{Definitions}
\Crefname{Rmk}{Remark}{Remarks}
\Crefname{notation}{Notation}{Notations}
\Crefname{figure}{Figure}{Figures}

\begin{document}

\title[Cut and project schemes in the Poincar\'e disc]{Cut and project schemes in the Poincar\'e disc:\\[0.25em]
From cocompact Fuchsian groups to chaotic Delone sets}

\author{Richard A. Howat}
\address{Richard A.\,Howat --
ORCID: 0009-0009-8008-8922\newline
\parbox{2em}{\mbox{ }}email: rah955@student.bham.ac.uk\newline 
\parbox{2em}{\mbox{ }}address: School of Mathematics, University of Birmingham, UK}

\author{Tony Samuel}
\address{Tony Samuel: ORCID: 0000-0002-5796-0438\newline
\parbox{2em}{\mbox{ }}email: a.samuel@exeter.ac.uk\newline
\parbox{2em}{\mbox{ }}address: Department of Mathematics and Statistics, University of Exeter, UK}

\author{Ay\c{s}e Y{\i}ltekin-Karata\c{s}}
\address{Ay\c{s}e Y{\i}ltekin-Karata\c{s} -- ORCID: 0009-0003-6838-0759 \newline
\parbox{2em}{\mbox{ }}email: akaratas@bartin.edu.tr\newline
\parbox{2em}{\mbox{ }}address: Department of Mathematics, Bartın University, Türkiye}

\keywords{Cut and project schemes; Delone sets; M\"obius transformations; Fuchsian groups}
\subjclass[2010]{52C23; 37B05; 30F35}

\begin{abstract}
A question raised by Davies \textsl{et al} [\textsl{Phys.\,Rev.\,Lett.}\,{\bfseries 131}, 2023] is:\ \textsl{Can developing new cut and project models, where the lattice is not square or the curve is non-linear, generate better performing graded metamaterials?} In this article, we study a natural construction of such a cut and project scheme, namely, cut and project schemes in relation to cocompact Fuchsian groups acting on the Poincaré disc model of hyperbolic space. We present a condition on the fundamental domain (a hyperbolic polygon) of the group so that the resulting cut and project set $S \subset \mathbb{R}$ is a chaotic Delone set. We also investigate the set of tile lengths of $S$, namely $\mathcal{L}_{S} = \{ z - y : z,y \in S, \, z > y \; \text{and} \; (y, z) \cap S = \emptyset \}$, and show that this set is countably infinite.
Finally, we apply our results to cocompact Fuchsian triangle groups and show that the resulting cut and project sets are chaotic Delone, complementing and extending the work of L\'{o}pez \textsl{et al.} [\textsl{Discrete Contin. Dyn. Syst.}  {\bfseries 41}, 2021].
\end{abstract}

\maketitle

\section{Introduction}
Central to understanding properties of physical quasicrystals and metamaterials (deliberately engineered composites with bespoke wave-control properties that arise from their structure) is the careful analysis of point sets arising from, for instance, cut and project schemes in Euclidean space \cite{Aperiodic_Order_Vol1, GL_1989}. Here, loosely speaking, one takes a straight line cutting through a square lattice, projects a given window of lattice points onto a straight line $l$ and considers the resulting point set $\Lambda \subset l$, see \Cref{Fig1}. Under this, and similar constructions, the given point set is a \emph{Delone set}, namely uniformly discrete and relatively dense.  In \cite{PhysRevLett.131.177001}, this method is adapted by examining what happens when instead of the straight line $l$ one takes a quadratic curve. Using this new construction, the authors of \cite{PhysRevLett.131.177001}, built new acoustic metamaterials with large spectral gaps, yielding high-performing sound insulators.  Moreover, a question raised in \cite{PhysRevLett.131.177001} is: \textsl{Can developing new cut and project models, where the lattice is not square or the curve is non-linear, generate better performing graded metamaterials?}

    \begin{figure}[hb!]
    \begin{center}
    \includegraphics[scale=1]{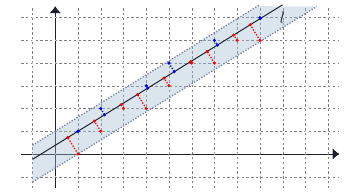} 
    \caption{Example of a cut and project scheme in $\mathbb{R}^{2}$.}
\label{Fig1}
    \end{center}
    \end{figure}

A natural construction of such a cut and project scheme can be found in \cite{ChaoticDeloneSet}, where the authors investigate point sets of cut and project schemes in the Poincar\'e disc model of hyperbolic space, and which were found, under certain hard to verify conditions, to be an interesting form of Delone set called \textit{chaotic Delone} (see \Cref{def:ChaoticDeloneSet}).

To define cut and projection schemes in Poincar\'e disc model of hyperbolic space, we require the following notation. Throughout, we denote the Poincaré disc by $\mathbb{D}=\{z\in\mathbb{C}:|z|<1\}$ and endow it with the hyperbolic metric $d$ induced by the differential
\begin{align}\label{eq:hyperbolic_differntial}
    ds = \frac{2|dz|}{1-|
    z|^2}.
\end{align}
We assume that geodesics in $\mathbb{D}$ with respect to $d$ are parametrised by arc length (namely, traversed with unit speed) and we orient the normal bundle of a geodesic $k$ by the standard orientation of the Poincaré disc and the orientation of the geodesic $k$.

Let $\Gamma$ be a Fuchsian group with fundamental domain $\tau$, and let $x\in\tau$. Let $k$ denote a geodesic in $\mathbb{D}$ with parametrisation $\kappa : \mathbb{R} \to \mathbb{D}$. For $\rho \in \mathbb{R}$ positive, set $\mathcal{N}(k,\rho) = \{x\in \mathbb{D}:d(x,k)<\rho\}$ and let $\partial^+ \mathcal{N}(k,\rho)$ be the connected component of the boundary of $\mathcal{N}(k,\rho)$ which is positive with respect to the normal bundle of $k$. We define 
    \begin{align}\label{eq:N+}
    \overline{\mathcal{N}(k,\rho)}^+=\mathcal{N}(k,\rho)\cup \partial^+\mathcal{N}(k,\rho)
    \quad \text{and} \quad
    S_{\mathbb{D}}^+(k,\rho,x)=p_{k} (\Gamma (x) \cap\overline{\mathcal{N}(k,\rho)}^+),
    \end{align}    
where $\Gamma(x)$ denotes the orbit of $x$ under the action of $\Gamma$ on $\mathbb{D}$ and $p_k:\mathbb{D}\rightarrow k$ denotes the orthogonal projection of $\mathbb{D}$ onto $k$. We call $S_{\mathbb{D}}^+(k,\rho,x)$ a \textit{hyperbolic cut and project set}, and let 
    \begin{align*}
    S^+(k,\rho,x) = \kappa^{-1}(S_{\mathbb{D}}^+(k,\rho,x)) \subseteq \mathbb{R}.
    \end{align*}
In \cite{ChaoticDeloneSet}, the sets $S_{\mathbb{D}}^+(k,\rho,x)$ are studied when $\Gamma$ is a torsion free cocompact discrete subgroup of $\operatorname{Isom}^{+}(\mathbb{D})$, which by \cite[Definition 9.17 and Lemma 11.9]{EinsiedlerWard2011}
is equivalent to $\Gamma$ being uniform lattices. Indeed, letting 
    \begin{align*}
       \operatorname{inj}(\Gamma,x)=\frac{1}{2}\inf \{d(y,z)\mid y,z\in \Gamma(x), \, y\neq z\} 
    \end{align*}
    denote the \textit{injectivity radius} of $\Sigma_\Gamma=\Gamma\backslash\mathbb{D}$ with respect to $x$, it is found, taking a geodesic $\ell$ which contains a point whose orbit under the geodesic flow is dense in the unit tangent bundle of $\Gamma \backslash \mathbb{D}$, $x\in\tau$ and $\rho\in (0,\operatorname{inj}(\Gamma,x))$, that $S^+(k,\rho,x)$ is chaotic Delone if and only if the following condition is satisfied.

\begin{figure}[ht]
 \includegraphics[scale=0.33]{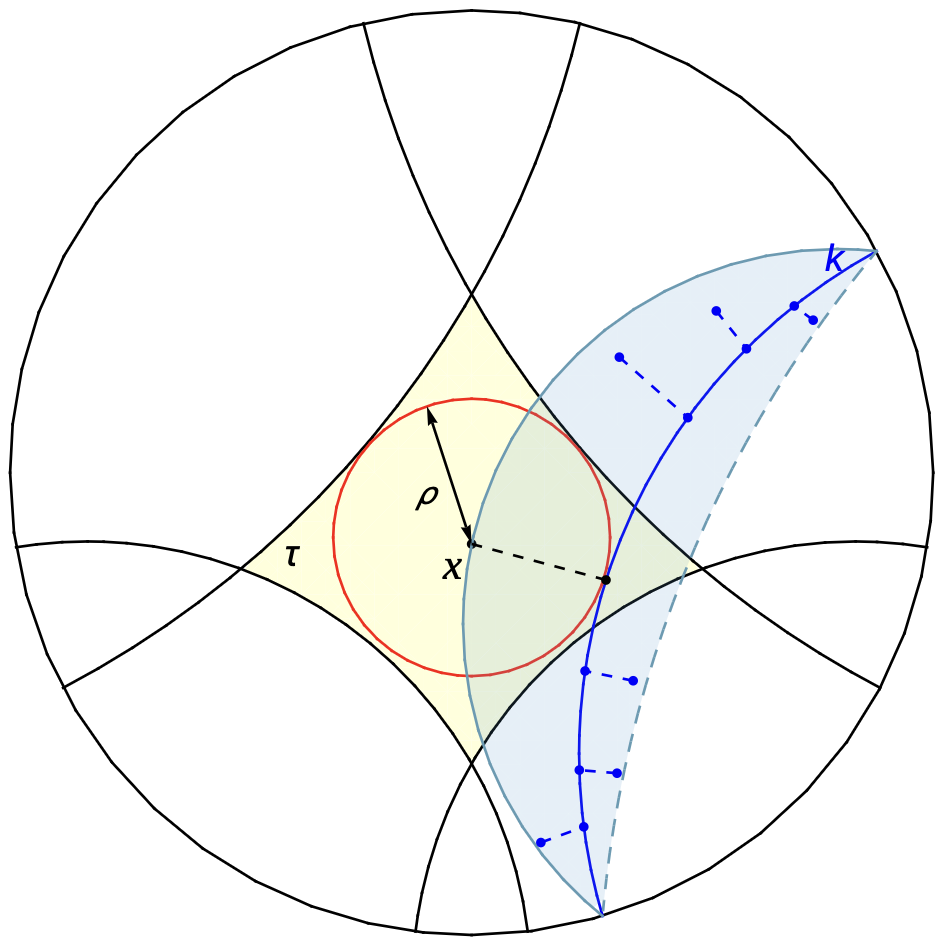}
 \caption{A hyperbolic cut and project scheme for the triangle group with signature (6,6,3) and quadrilateral fundamental domain.}
 \label{f:Cut and Project}
\end{figure}

{\em Condition\,B: Any geodesic on $\Sigma_\Gamma$ intersects the closed disc $\Delta$ of radius $\rho$ centred at $\pi(x)$, and there exists no geodesic with one-sided tangency with $\partial\Delta$.}

Here, $\pi$ denotes the natural quotient map from $\mathbb{D}$ to $\Gamma\backslash\mathbb{D}$. 
It is stated in \cite{ChaoticDeloneSet} that this condition is highly non-trivial to verify. Our main result, \Cref{Thrm-Delone if and only if one sided tangency} resolves this by relaxing the torsion free assumption and gives an easy to verify condition on the fundamental domain of a cocompact Fuchsian group for when $S^+(k,\rho,x)$ is chaotic Delone set, where $x$ is the hyperbolic centre of the fundamental domain, namely the point that is equidistant, with respect to $d$, from each side of $\tau$. In particular, in the setting of cocompact triangle group we obtain the following.

\begin{Thm}\label{T:MainResult}
    Let $\Gamma$ be a cocompact triangle group with signature $(m_1, m_2, m_3)$, $\mathcal{F}$ be a fundamental domain for $\Gamma$ with centre point $x$. Moreover, let $\ell$, be a geodesic whose orbit under the geodesic flow is dense in the unit tangent bundle of $\mathcal{F}$. 
    \begin{enumerate}
        \item If $\mathcal{F}$ is a quadrilateral, then there exists $\rho>0$ such that $S^+(\ell,\rho,x)$ is chaotic Delone if and only if there are at least two odd numbers in the signature of $\Gamma$.
        \item If $\mathcal{F}$ is a hexagon, then there exists $\rho>0$ such that $S^+(\ell,\rho,x)$ is chaotic Delone.
        \item The set of tile lengths of $S^+(\ell,\rho,x)$, namely the set
            \begin{align*}
            \mathcal{L}_{S^+(\ell,\rho,x)} = \{ z-y : z,y\in S^+(\ell,\rho,x), \, z > y \; \text{and} \; (y,z) \cap S^+(\ell,\rho,x) = \emptyset
    \},
    \end{align*}
        is countably infinite.
    \end{enumerate}
\end{Thm}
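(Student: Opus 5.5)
The plan is to reduce all three parts to the general criterion announced above, \Cref{Thrm-Delone if and only if one sided tangency}, which characterises when $S^+(k,\rho,x)$ is chaotic Delone in terms of the fundamental polygon $\tau$ and its hyperbolic centre $x$. I expect that criterion to read roughly as follows. Take $\rho$ slightly larger than the inradius of $\tau$, so that the closed disc of radius $\rho$ about $x$ meets every side of $\tau$. Then every geodesic crossing $\tau$ meets this disc. The remaining requirement, absence of one-sided tangency, should reduce to a combinatorial condition on how the side pairings of $\Gamma$ identify the sides of $\tau$: each side of $\tau$ must be paired with a side whose tangency point with the inscribed circle lies on the "opposite" side, so that no geodesic can touch $\partial\Delta$ from one side only. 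So the first step is to record, for a cocompact triangle group with signature $(m_1,m_2,m_3)$, the standard fundamental domains obtained by doubling a $(\pi/m_1,\pi/m_2,\pi/m_3)$ triangle:
\begin{itemize}
\item the quadrilateral, which is the union of the triangle with its reflection across one side;
\item the hexagon, which is obtained from a Dirichlet-type construction centred at an interior point.
\end{itemize}
For each, I would write down the side pairings explicitly and locate the centre $x$. For the quadrilateral, $x$ exists only when it is tangential, so I would check this via the symmetry of the construction.

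For part (1), the quadrilateral has two pairs of adjacent sides, each pair identified by the elliptic generator rotating about the common vertex. A vertex of angle $2\pi/m_i$ whose paired sides meet there creates a geodesic configuration that is tangent to $\partial\Delta$ on one side. I would analyse the orbit of the inscribed circle around each vertex cycle: around a cone point of order $m$, the $m$ translates of $\Delta$ are arranged symmetrically. A geodesic tangent to one translate is then tangent to the antipodal translate on the same or on the opposite side, according to whether $m$ is even or odd. The obstruction then holds exactly at an even-order vertex among those at which the quadrilateral has its two paired vertices, and there are two such vertices. This gives the condition "at least two of the $m_i$ are odd", after a case analysis over which $m_i$ sits at the reflected edge. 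This parity bookkeeping is the step I expect to be the main obstacle. To get it right, I would first test it on the $(6,6,3)$ example pictured in \Cref{f:Cut and Project}. For part (2), the hexagon's sides are paired opposite-to-opposite, that is, by a rotation through $\pi$ about the centre, composed appropriately. The tangency points then come in antipodal pairs, so the criterion holds unconditionally and some $\rho$ works.

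For part (3), once $S=S^+(\ell,\rho,x)$ is Delone, the tile lengths lie in a bounded interval bounded away from $0$. To show countability, I would note that each tile comes from a pair of consecutive orbit points $g(x),h(x)$ in the strip. Its length $d(p_\ell g x, p_\ell h x)$ is then determined by $g^{-1}h\in\Gamma$ together with the relative position of $\ell$. Since $\Gamma$ is countable and the length is a function of the countably many orbit points, $\mathcal{L}_S$ is at most countable. For infinitude, I would use density of the geodesic flow orbit of $\ell$: the positions of $\ell$ relative to $g(x)$ come arbitrarily close to any prescribed configuration. The projected gap varies continuously, and nonconstantly, in this position, say as the distance of $\ell$ from the pair changes. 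Hence infinitely many distinct lengths occur. The finiteness of the local configurations that occur, for a fixed $\rho$, must be avoided here; the continuum of distances of $\ell$ to orbit points guarantees this.
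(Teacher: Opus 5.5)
\textbf{There is a genuine gap in your reduction for (1) and (2): it fails at the first step, the choice of $\rho$ slightly larger than the inradius of $\tau$.}

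Every side pairing here is a rotation about the common vertex of the two sides it identifies ($T_1,T_3$ about $v_2,v_4$; $U_i$ about $v_i$). The two tangent lengths from a vertex to the incircle are equal, so the pairings match tangency points. Hence the centre of the neighbouring tile across a side is the reflection of $x$ in that side, and the inradius is exactly $\mu_x=\operatorname{inj}(\Gamma,x)$.

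By \Cref{lem: ap}\,(2), no $\rho\geq\operatorname{inj}(\Gamma,x)$ gives a uniformly discrete set. Concretely, for $\rho>\mu_x$, two neighbouring centres lie strictly inside the window of any geodesic crossing the midpoint of their connecting segment almost perpendicularly, and they project to almost the same point; translates of $\ell$ approximate such geodesics. So your $S^+(\ell,\rho,x)$ is never Delone. Condition B in the introduction is likewise stated only for $\rho<\operatorname{inj}(\Gamma,x)$.

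Your next claim, that every geodesic crossing $\tau$ meets the disc, is also false. A geodesic cutting off a small corner at a vertex $v$ stays at distance roughly $d(x,v)>\mu_x$ from $x$. What must be shown is that, for some $\rho<\mu_x$, every geodesic meets some translate $\gamma(B(x,\rho))$. One-sided tangency plays no role: \Cref{Thrm-Delone if and only if one sided tangency} has no such hypothesis.

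The missing idea is this. Since $B(x,\mu_x)\subset\tau^\circ$, the only obstruction is a geodesic lying entirely in the edge network $\Gamma(\partial\tau)$, i.e.\ an extended side that never enters a tile interior. Conversely, if each of the finitely many extended sides enters some $\gamma(\tau^\circ)$, it meets $\gamma(B(x,\mu_x))$ by tangency. A perturbation and compactness argument then yields a single $\rho<\mu_x$ that works for all geodesics. This is \Cref{Nice condition for aperiodic delone sets}, and it is what (1) and (2) are reduced to.

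Consequently, your parity and tangency bookkeeping targets the wrong property. In (1), what matters is whether extended sides stay in $\Gamma(\partial\mathcal{F})$:
\begin{itemize}
\item At $v_2$ and $v_4$, a half-turn ($T_2^{m_2/2}$, resp.\ $T_4^{m_1/2}$) exists iff the order is even, and it maps a side onto its own extension. For odd order, $T^{(m+1)/2}$ shows the extension enters a tile.
\item At $v_1,v_3$ (angle $\pi/m_3$), the extension always continues along an edge. The parity of $m_3$ only decides whether the next vertex is of type $v_4$ or $v_2$.
\end{itemize}
Sufficiency of two odd entries requires following each extension through possibly two vertices (\Cref{l:two_odd_signatures_QuadDomain}). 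Necessity requires an entire geodesic inside $\Gamma(\partial\mathcal{F})$. For example, for $m_2,m_3$ even, the geodesic through $r_1$ closes up via $(T_2T_4)^{m_3/2}$ and $T_2^{m_2/2}$; the other cases are symmetric or come from \cite[Theorem 1]{schmidt2024continuous}.

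In (2), your description of the hexagon is wrong. The pairings are not half-turns about the centre (interior points of a fundamental domain have trivial stabiliser). They are the rotations $U_i$ about $v_i$ identifying the adjacent sides $t_i,s_i$. The result holds unconditionally for two reasons. At images of the accidental vertices $a_j$, three tiles meet with angles less than $\pi$ summing to $2\pi$, so extensions there enter tile interiors. At $v_i$, either $m_i$ is odd, or $U_i^{m_i/2}$ carries the extension along an edge to an image of some $a_j$ (\Cref{l:hexDomain}).

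Your part (3), by contrast, is a workable alternative to the paper's argument. If $\mathcal{L}_{S^+(\ell,\rho,x)}$ were finite, \Cref{lem:Chaotic Behaviour of geodesic}\,(2) together with $\delta$-separation would force every gap of every $S^+(k,\rho,x)$ into that finite set. This contradicts the continuous, non-constant dependence on $k$ of a gap between consecutive points lying strictly inside the window; you should state explicitly that this is an open condition. The paper instead uses closed geodesics $k$ to place closed-geodesic lengths in $L_1\mathbb{N}\oplus\cdots\oplus L_N\mathbb{N}$. It then invokes the existence of infinitely many $\mathbb{Q}$-independent lengths in $L(\Gamma)$, an external input your route avoids.
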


This is significant as triangle groups do not fall into the setting of \cite{ChaoticDeloneSet} and moreover, concrete examples of chaotic Delone sets from hyperbolic cut and project schemes are limited to a small family of examples generated by a single construction (see \cite[Example\;1.5]{ChaoticDeloneSet}) and the arguments used to verify this do not, in general, apply to triangle groups. 
Moreover, we highlight that \Cref{T:MainResult}\,(3) holds in a wider generality, see \Cref{Prop:lengths_of_gaps} for further details.

\section{Preliminaries}

In this section, we discuss some necessary preliminaries. For a complete treatment of hyperbolic geometry, we direct the reader to, for instance  \cite{beardon2012geometry,EinsiedlerWard2011,Katok1992FuchsianGroups,ErgodicThryDGroups}, and for aperiodic order, we suggest \cite{Aperiodic_Order_Vol1}.

As noted in the introduction, we denote the Poincaré disc by $\mathbb{D}=\{z\in\mathbb{C}:|z|<1\}$ and equip it with the hyperbolic metric $d$ induced by the differential given in \eqref{eq:hyperbolic_differntial}. Following \cite{beardon2012geometry}, we recall that geodesics in $\mathbb{D}$ are arcs generated by intersections of $\mathbb{D}$ with circles in $\overline{\mathbb{C}}$ perpendicular to $\mathbb{S}^{1} = \{z\in\mathbb{C}:|z|=1\}$, namely the boundary of $\mathbb{D}$. Throughout, when given a parametrisation $\kappa : \mathbb{R} \to \mathbb{D}$ of a geodesic $k$, we assume that $\kappa$ has unit speed.

The group of orientation preserving isometries of $\mathbb{D}$, denoted by $\operatorname{Isom}^{+}(\mathbb{D})$, is isomorphic to the group $\operatorname{PSL}(2,\mathbb{R}) = \operatorname{SL}(2,\mathbb{R})/\{\pm I\}$, and consists of M\"obius transformations of the form
\begin{align}\label{eq:psl2element}
    \gamma(z)=\frac{az+\overline{b}}{bz+\overline{a}},
\end{align}
where $a,b\in\mathbb{C}$ and $|a|^2-|b|^2=1$. 
For ease of exposition, we also interpret such a $\gamma$ as a $2\times 2$-matrix
    \begin{align*}
        \begin{pmatrix}
            a & \overline{b}\\
            b & \overline{a}
        \end{pmatrix}.
    \end{align*}
A \textit{Fuchsian group} is a discrete subgroup $\Gamma$ of $\operatorname{Isom}^{+}(\mathbb{D})$, namely a subgroup of $\operatorname{Isom}^{+}(\mathbb{D})$ such that the orbit $\Gamma(0) = \{\gamma(0) : \gamma \in \Gamma \}$ is a discrete subset of $\mathbb{D}$. We say that $\Gamma$ is \textit{non-elementary} if $\Gamma (x)$ is not finite for any $x \in \mathbb{D}$. Moreover, a Fuchsian group is called \textit{cocompact} if its quotient space $\Gamma\backslash \mathbb{D}$ is compact.
It is also important to note that cocompact Fuchsian groups have no parabolic element; a \textit{parabolic element} of $\operatorname{Isom}^{+}(\mathbb{D})$ is an element of the form given in \eqref{eq:psl2element} with trace $2$, namely $|a + \overline{a}| = 2$.

A point $\xi\in\mathbb{S}^{1}$ is a \textit{limit point} of a Fuchsian group if for one, and hence every, point $x\in \mathbb{D}$ the orbit $\Gamma(x)$ accumulates at $\xi$. The set of all limit points of a Fuchsian group $\Gamma$ is called the \textit{limit set} of $\Gamma$ and denoted by $\Lambda(\Gamma)$. We say that $\Gamma$ as of the \textit{first kind} if $\Lambda(\Gamma)=\mathbb{S}^{1}$ and of the \textit{second kind} otherwise. 

A \textit{fundamental domain} of $\Gamma$ is a region $\tau\subseteq \mathbb{D}$ such that $\tau^{\circ} \cap \gamma(\tau^{\circ}) = \emptyset$ for all $\gamma \in \Gamma \setminus \{I\}$, where $I$ denotes the identity and $\tau^{\circ}$ is the interior of $\tau$. 
If $\Gamma$ is cocompact, then its fundamental domain $\tau$ has finite area, finitely many sides and hence finitely many vertices; thus, $\Gamma$ is finitely generated and of the first kind. If $\Gamma$ is of the first kind, then it is non-elementary; and so if $\Gamma$ is cocompact, it is non-elementary.

It is also possible to construct a fundamental domain of a cocompact Fuchsian group, where a side is paired (mapped) to another side by an element of $\Gamma$ called a side pairing element. In fact, these side pairing elements generate $\Gamma$.

\subsection{Length spectrum}

We call the quotient space $\Sigma_{\Gamma}=\mathbb{D}/\Gamma$ the \textit{hyperbolic orbifold of $\Gamma$}, and denote the unit tangent bundle of $\Sigma_{\Gamma}$ by $\operatorname{ST}(\Sigma_\Gamma)=\operatorname{ST}(\mathbb{D})/\Gamma$, observing that $\operatorname{ST}(\Sigma_\Gamma)$ is a 3-manifold. 

An element $\gamma \in \operatorname{Isom}^{+}(\mathbb{D})$, is \textit{hyperbolic} if and only if it is diagonalisable over $\mathbb{R}$; or equivalently if it has trace strictly greater than $2$. Moreover, for a non-elementary Fuchsian group $\Gamma$, the hyperbolic elements of $\Gamma$ are in one-to-one correspondence with the closed geodesics on $\Sigma_\Gamma$, namely to each hyperbolic $\gamma \in \Gamma$, there exists a closed geodesic of $\Sigma_\Gamma$ which remains invariant under the natural action of $\gamma$ on $\Sigma_\Gamma$, and vice versa.

For a hyperbolic element $\gamma\in\Gamma$, we have that the \textit{translation length} of $\gamma$, denoted $l(\gamma)$, is given by $l(\gamma)=2\log(|\lambda|)$, where $\lambda$ is the eigenvalue of $\gamma$ with $|\lambda|\geq 1$. We define the \textit{length spectrum of $\Gamma$} by 
    \begin{align*}
    L(\Gamma)
    = \{ x \in \mathbb{R} : x \; \text{is hyperbolic length of a closed geodesic on} \; \Sigma_\Gamma\}
    = \{l(\gamma) : \gamma\in\Gamma \; \text{is diagonalisable}\}.
    \end{align*}
This latter equality follows from the fact that the hyperbolic elements of $\Gamma$ are in one-to-one correspondence with the closed geodesics on $\Sigma_\Gamma$, and that if $\gamma$ is a diagonal hyperbolic element, then there exists a $\lambda \in \mathbb{R} \setminus \{ 0, 1\}$ such that 
    \begin{align*}
    \gamma(z) = \frac{(-\lambda^{2}-1)z+(-\lambda^{2}i+i)}{(\lambda^{2}i-i)z + (-\lambda^{2}-1)},
    \end{align*}
for $z \in \mathbb{D}$, in which case, the closed geodesic corresponding to $\gamma$ on $\Sigma_\Gamma$, is the lift of $\mathbb{D}$ to $\Sigma_\Gamma$ of the straight line segment connecting $0$, the origin of Poincar\'e disc, and the point $(\lambda^{2}-1)i/(\lambda^2+1) \in \mathbb{D}$, whose hyperbolic length is equal to $2\log(\lvert \lambda \rvert)$. Note, this straight line segment lies completely on a geodesic of $\mathbb{D}$, namely a diameter of $\mathbb{D}$.

\subsection{The geodesic flow on the unit tangent bundle}\label{sec:GF}

For $(x,v) \in \operatorname{ST}(\mathbb{D})$ we let $\kappa_{(x, v)}$ denote the parameterisation of the unique geodesic with $\kappa_{(x,v)}(0) = x$ and $\kappa_{(x,v)}'(x)=v$. We also recall that parameterisations of geodesics have unit speed. The \textit{geodesic flow }$\varphi_t:\operatorname{ST}(\mathbb{D})\rightarrow \operatorname{ST}(\mathbb{D})$ on $\mathbb{D}$ is a flow on the unit tangent bundle $\operatorname{ST}(\mathbb{D})$ defined by 
\begin{align*}
    \varphi_t(x,v) = (\kappa_{(x,v)}(t),\kappa'_{(x,v)}(t)),
\end{align*}
for $(x, v) \in \operatorname{ST}(\mathbb{D})$ and $t \in \mathbb{R}$. Moreover, for an isometry $\gamma$ of the Poincaré disc, the action of $\gamma$ on $\operatorname{ST}(\mathbb{D})$ is given by $\gamma(x,v)=(\gamma(x),\gamma'(x)v)$. Note this action is well defined since isometries of the Poincaré disc are conformal, and thus preserve angles.  Moreover, this yields that $\varphi_{t} \circ \gamma = \gamma \circ \varphi_{t}$. From this, for a Fuchsian group $\Gamma$, we define the geodesic flow on $\Sigma_{\Gamma}$ to be the lift of $\varphi_t$ from $\operatorname{ST}(\mathbb{D})$ to $\operatorname{ST}(\Sigma_\Gamma)$.

We say that the geodesic flow of $\Gamma$ is \textit{topologically transitive} if there exists a $(x, v) \in \operatorname{ST}(\Sigma_\Gamma)$ with $\{ \varphi_{t}(x,v) : t \in \mathbb{R} \}$ dense in $\operatorname{ST}(\Sigma_\Gamma)$.

\begin{Lem}\label{Lem:trans anosov flow}
If $\Gamma$ is a cocompact, then the geodesic flow of $\Gamma$ is topologically transitive. Moreover, the set of the periodic points of the geodesic flow is dense in $\operatorname{ST}(\Sigma_\Gamma)$.
\end{Lem}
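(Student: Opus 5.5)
The plan is to reduce both claims to standard facts about the geodesic flow on $\operatorname{ST}(\mathbb{D})$, viewed as $\operatorname{PSL}(2,\mathbb{R})$ with the flow given by right multiplication by the diagonal subgroup $a_t = \operatorname{diag}(e^{t/2},e^{-t/2})$. Under this identification $\operatorname{ST}(\Sigma_\Gamma) = \Gamma\backslash\operatorname{PSL}(2,\mathbb{R})$. Since $\Gamma$ is cocompact, this is a compact quotient carrying a finite invariant (Haar) measure $m$.

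\textbf{Topological transitivity.} I would first invoke ergodicity of the geodesic flow with respect to $m$. This is Hopf's theorem, or equivalently the Howe--Moore / Mautner argument applied to the lattice $\Gamma$; see for instance \cite[Chapter 11]{EinsiedlerWard2011}. Then I fix a countable basis $\{U_n\}$ of nonempty open sets of the compact metric space $\operatorname{ST}(\Sigma_\Gamma)$. Each $U_n$ has positive $m$-measure. By ergodicity, the set $\bigcup_{t\in\mathbb{R}}\varphi_{t}(U_n)$ is invariant with positive measure, hence of full measure. Therefore
\[
G=\bigcap_n \bigcup_{t}\varphi_t(U_n)
\]
has full measure and in particular is nonempty. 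Any point of $G$ has an orbit meeting every $U_n$, so that orbit is dense. The fact that $\Gamma$ may have torsion (orbifold case) does not matter, since the argument only uses that $\Gamma$ is a lattice.

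\textbf{Density of periodic orbits.} I would use the Anosov closing lemma, or more concretely a shadowing construction in $\operatorname{PSL}(2,\mathbb{R})$. Take $(x,v)$ with a dense orbit, and fix a point $p$ and a small neighbourhood $U$ of $p$. The dense orbit returns to $U$ at arbitrarily large times. Choose a return time $T$ large, so that $g a_T = \gamma g'$ with $g,g'$ close to $p$ (lifted) and $\gamma\in\Gamma$. Then $g^{-1}\gamma g \approx a_T$ up to a small perturbation. For $T$ large, this element is hyperbolic, because its trace is close to $2\cosh(T/2)>2$. Its axis passes close to the basepoint of $g$, with direction close to $v_g$, by a perturbation estimate on hyperbolic elements near $a_T$. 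Hence the closed geodesic corresponding to $\gamma$ under the correspondence between hyperbolic elements and closed geodesics gives a periodic orbit through $U$.

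The main obstacle is making this perturbation estimate quantitative: I need to show that the axis of $h\,a_T\,h'$, with $h,h'$ near the identity, is close to the axis of $a_T$, uniformly as $T\to\infty$. I would handle it using the hyperbolicity of $a_T$. It contracts the stable direction and expands the unstable one, so its attracting and repelling boundary fixed points move only slightly under a small perturbation. Alternatively, I would simply cite the Anosov closing lemma, since the geodesic flow of a compact hyperbolic orbifold is Anosov. That route needs only the observation that finite torsion does not affect the local hyperbolic estimates, because these are carried out in $\operatorname{PSL}(2,\mathbb{R})$ itself.
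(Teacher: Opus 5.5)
Your proposal is correct, but it takes a different route from the paper's.

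\emph{The paper's route.} It never argues on $\Gamma\backslash\operatorname{PSL}(2,\mathbb{R})$ directly. It uses Selberg's lemma to pass to a torsion-free normal subgroup $\Gamma_1$ of finite index, so that $\Gamma_1\backslash\mathbb{D}$ is a compact hyperbolic surface. It then quotes the surface results \cite[Theorems 5.4.14 and 5.4.15]{KatokHasselblatt1995}. Finally, it pushes transitivity and density of periodic orbits down through the finite-to-one semiconjugacy $\widetilde{\pi}\circ\psi_t=\varphi_t\circ\widetilde{\pi}$.

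\emph{Your route.} You stay on $\Gamma\backslash\operatorname{PSL}(2,\mathbb{R})$ throughout. What makes this legitimate, and what effectively replaces Selberg's lemma, is that nontrivial elements of $\operatorname{PSL}(2,\mathbb{R})$ act freely on $\operatorname{ST}(\mathbb{D})$. Hence torsion creates no singular points in $\operatorname{ST}(\Sigma_\Gamma)$, and you should state this explicitly.

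\emph{What your route buys.} It proves more: almost every orbit is dense. It also applies to every lattice, since neither the Mautner/Howe--Moore ergodicity nor your closing argument uses compactness. For the returns to $U$ at large times, Poincar\'e recurrence for the finite Haar measure suffices, so you do not even need the dense orbit there.

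\emph{What it costs.} You need heavier input (ergodicity) plus the perturbation estimate you flag as the main obstacle. That estimate is in fact elementary. In the upper half-plane, the fixed points of $z\mapsto e^{T}h(z)$ are the solutions of $h(z)=e^{-T}z$. For $h$ in a fixed small neighbourhood of the identity, one fixed point lies within $O(e^{-T})$ of $h^{-1}(0)$, which is near $0$. The other is pushed towards $\infty$, uniformly for large $T$. Hence the axis of $a_Th$ passes near $i$, nearly vertical and with the upward (translation) orientation, which is exactly what you need.

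The paper's proof is shorter only because it delegates all of this to textbook results on compact surfaces and spends its effort on the passage to a finite cover.
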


\begin{proof}
As $\Gamma$ is cocompact, it is finitely generated, and so Selberg's lemma \cite[Page 10]{beardon2012geometry} yields that there exists a torsion-free normal subgroup $\Gamma_1\subset\Gamma$ of finite index.  Further, by \cite[Corollary 4.2.3.]{Katok1992FuchsianGroups}, its fundamental domain is also compact. Applying \cite[Theorem 3.1.2]{Katok1992FuchsianGroups}, as $\Gamma_{1}$ is a finite index normal subgroup of $\Gamma$, it has a compact fundamental domain. By a second application of \cite[Corollary 4.2.3.]{Katok1992FuchsianGroups}, the quotient space $M_1=\Gamma_1\backslash \mathbb{D}$ is also compact. Hence, \cite[Theorem 5.4.15]{KatokHasselblatt1995} gives the geodesic flow of $\Gamma_{1}$ is topologically transitive. Letting $n \in \mathbb{N}$ denote the index $[\Gamma : \Gamma_{1}]$, the natural projection
    \begin{align*}
        \widetilde{\pi} : \operatorname{ST}(\Sigma_{\Gamma_1}) \to \operatorname{ST}(\Sigma_\Gamma)
    \end{align*}
is surjective and $n$-to-$1$. Denoting the geodesic flow of $\Gamma$ by
$\varphi_t$ and the geodesic flow of $\Gamma_{1}$ by $\psi_t$, it follows that $\widetilde{\pi}\circ \psi_t=\varphi_t\circ\widetilde{\pi}$ for all $t\in\mathbb{R}$. Combining this with the fact that $\widetilde{\pi}$ is $n$-to-$1$ and $\psi_t$ is topologically transitive, yields $\varphi_{t}$ is topologically transitive. The second statement on the periodic points follows in the same way utilising \cite[Theorem 5.4.14]{KatokHasselblatt1995} in place of \cite[Theorem 5.4.15]{KatokHasselblatt1995}.
\end{proof}

\subsection{Cocompact Fuchsian triangle groups}\label{cocompactTriGp}

Given a cocompact Fuchsian group $\Gamma$, we call the orbit of a vertex $v$ of $\tau$, the fundamental domain of $\Gamma$ a \textit{cycle}.  Letting $C$ denote cycle, for each $w \in C$, we let $G_{w} \subseteq \Gamma$ denote the stabilizer of $w$. If $G_{w}$ is non-empty, we set $\operatorname{Ord}(G_{w})$ to be the common order of $G_{w}$, and otherwise we set $\operatorname{Ord}(G_{w}) = 1$.  Since for $u, w \in C$ we have that $G_{w}$ and $G_{u}$ are conjugate, we have  $\operatorname{Ord}(G_{w}) = \operatorname{Ord}(G_{u})$, and thus we denote this common value by $\operatorname{Ord}(C)$.  In the case when $\operatorname{Ord}(C) = 1$, we call $C$ an \textit{accidental cycle}.  Note, since $\Gamma$ is discrete and cocompact, $\operatorname{Ord}(C)$ is always finite. Moreover, if $\gamma \in \operatorname{Isom}^{+}(\mathbb{D})$ fixes a $z \in \mathbb{D}$ it is called \textit{elliptic}; this is equivalent to $\gamma$ having trace strictly less than $2$. Thus, if a cycle $C$ of $\Gamma$ is not an accidental cycle, then $G_{w}$, for $w \in C$, is a cyclic group generated by an elliptic element of finite order, hence we call $C$ an \textit{elliptic cycle}.

Let $v$ be a vertex of $\tau$, and let $\theta_{v}$ represent the internal angle of $\tau$ that subtends at $v$. If \mbox{$C = \{ v_{1}, \cdots, v_{n} \}$} denotes the cycle containing $v$, the \textit{angle sum} $\theta(C)$ of $C$ is defined as $\theta_{v_{1}} + \theta_{v_2}+\cdots+ \theta_{v_n}$. If $\tau$ is convex, then $\theta(C)=2\pi/\operatorname{Ord}(C)$, see \cite[Theorem 9.3.5.]{beardon2012geometry}. Thus, $\theta(C)=2\pi$ for an accidental cycle $C$.

Let $\mathcal{C}$ denote the collection of elliptic cycles, let $C_{1}, \cdots, C_{r}$ be an enumeration of the elements of $\mathcal{C}$, and set $m_j = \operatorname{Ord}(C_{j})$, for $j \in \{ 1, \cdots, r \}$. The symbol $(g; m_1,m_2,\cdots,m_r)$ is called the \textit{signature} of $\Gamma$, where $g$ denote the genus of the surface $\Gamma\backslash\mathbb{D}$.

A cocompact Fuchsian group with the signature $(0; m_1,m_2,m_3)$, where 
    \begin{align*}
    \frac{1}{m_1}+\frac{1}{m_2}+\frac{1}{m_3}<1,
    \end{align*}
and group presentation $\langle A, B : A^{m_{1}} = B^{m_{2}} = (AB)^{m_3} = I \rangle$, is called a cocompact Fuchsian \textit{triangle group}. Since, a cocompact Fuchsian triangle group always has genus zero, we abbreviate its signature to $(m_1,m_2,m_3)$. We also note, given a Fuchsian triangle group, by \cite[Theorem 10.6.4]{beardon2012geometry}, it exhibits a convex fundamental domain which is necessarily a hexagon or a quadrilateral.

For the following sections (\Cref{sec:QFD,sec:HFD}), we will use the notation from \cite{schmidt2024continuous} and throughout these sections $\Gamma$ will denote a cocompact Fuchsian triangle group with convex fundamental domain which we will denote by $\mathcal{F}$.

\subsubsection{Quadrilateral fundamental domain}\label{sec:QFD}

We label the vertices of $\mathcal{F}$ as $v_1$, $v_{2}$, $v_{3}$, $v_{4}$, and to standardise, we assume they have internal angles of $\pi/m_3$, $2\pi/m_2$, $\pi/m_3$ and $2\pi/m_1$, respectively. See \Cref{f:fd663} for an example.

For $i \in \{ 1, 2, 3, 4 \}$, we let $r_i$ denote the side of $\mathcal{F}$ meeting $v_i$ at its right (facing inward). We label the generators of $\Gamma$ that are the side pairings of $\mathcal{F}$, taking $r_i$ to its paired side, by $T_i$.

\begin{figure}[ht]
\includegraphics[scale=0.35]{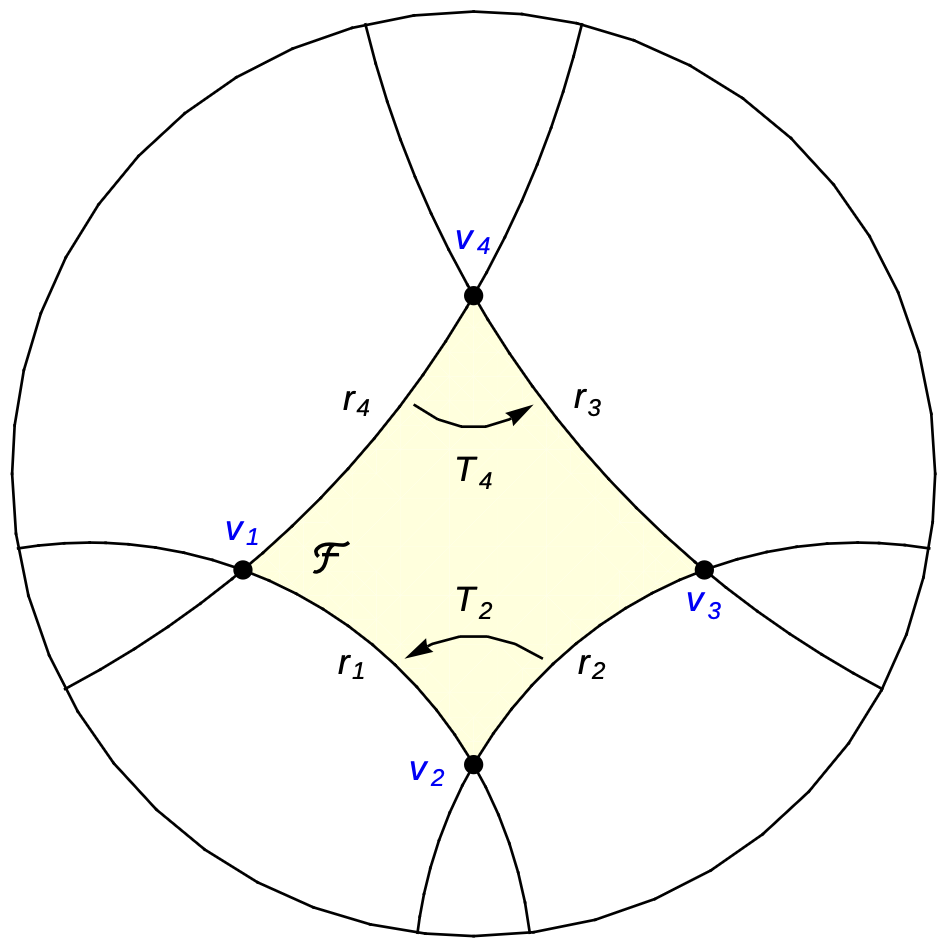} 
\caption{A quadrilateral fundamental domain for the triangle group with signature (6,6,3).}
\label{f:fd663}
\end{figure}

Since for a cocompact Fuchsian triangle group its generators are of finite order, and hence elliptic, for $i \in \{1, 2, 3, 4\}$, letting
    \begin{align}\label{eq:QuadDomainPair}
    \sigma(i) = \begin{cases}
    i-1     &   \text{if} \; i \; \text{is even}, \\
    i+1     &   \text{if} \; i \; \text{is odd},
    \end{cases}
    \qquad \text{and} \qquad
    \rho(i) = 
    \begin{cases}
    1               & \text{if} \; \sigma(i)=4,\\
    \sigma(i)+1     & \text{otherwise},
    \end{cases}
    \end{align}
we may assume without loss of generality, that $T_i(r_i)=r_{\sigma(i)}$ and $T_i(v_i)=v_{\rho(i)}$.   Accordingly, for $i \in \{ 1, 2, 3, 4\}$ we have $T_{\sigma(i)}T_i=I$ and $T_{i-1}(v_i)=v_{\rho(i)}$. Thus, we have three elliptic cycles $\{v_1, v_3\}$, $\{v_{1}\}$ and $\{ v_{4}\}$. Moreover, we find that $T_2 T_4$ and $T_3T_1$ fix $v_1$ and rotate through an angle of  $2\pi/m_3$ in opposite directions. Similarly, we have $T_4T_2$ and $T_1T_3$ fix $v_3$ and rotate by the angle of $2\pi/m_3$ in opposite directions.  
  
\subsubsection{Hexagonal fundamental domain}\label{sec:HFD}

We begin by observing that if a cocompact Fuchsian triangle group exhibits a hexagonal fundamental domain, it necessarily exhibits a quadrilateral fundamental domain; however the converse is not necessarily true, see for instance \cite[Theorem 10.6.4]{beardon2012geometry}. The proof of \cite[Theorem 10.6.4]{beardon2012geometry} also demonstrates that, in this former setting, namely when a cocompact Fuchsian triangle group $\Gamma$ exhibits a hexagonal fundamental domain $\mathcal{F}$, then it has four cycles, one accidental cycle and three elliptic cycles.  We therefore label the vertices of $\mathcal{F}$ as $v_1,v_2,v_3,a_1,a_2,a_3$, where each $v_{i}$ forms an elliptic cycle and the $a_i$'s form an accidental cycle. The vertices $v_i$ each have an internal angle of $2\pi/{m_i}$. Denoting the internal angles at the vertices $a_i$ by $\theta_i$, we necessarily have that $\theta_1+\theta_2+\theta_3=2\pi$. Let $s_i$ denote the sides of $\mathcal{F}$ corresponding to the side to the left of $v_i$ (facing inward), whereas $t_i$ corresponds to the sides to the right of $v_i$ (facing inward). The side pairings $U_i\in\Gamma$ pair the side $t_i$ to the side $s_i$, namely, they are rotations at $v_i$ by the angle of $2\pi/{m_i}$, and $U_{1}(a_{1}) = a_{3}$, $U_{2}(a_{2}) = a_{1}$ and $U_{3}(a_{2}) = a_{3}$. See \Cref{f:hexagonFD} for an example. 

\begin{figure}[ht]
\includegraphics[scale=0.35]{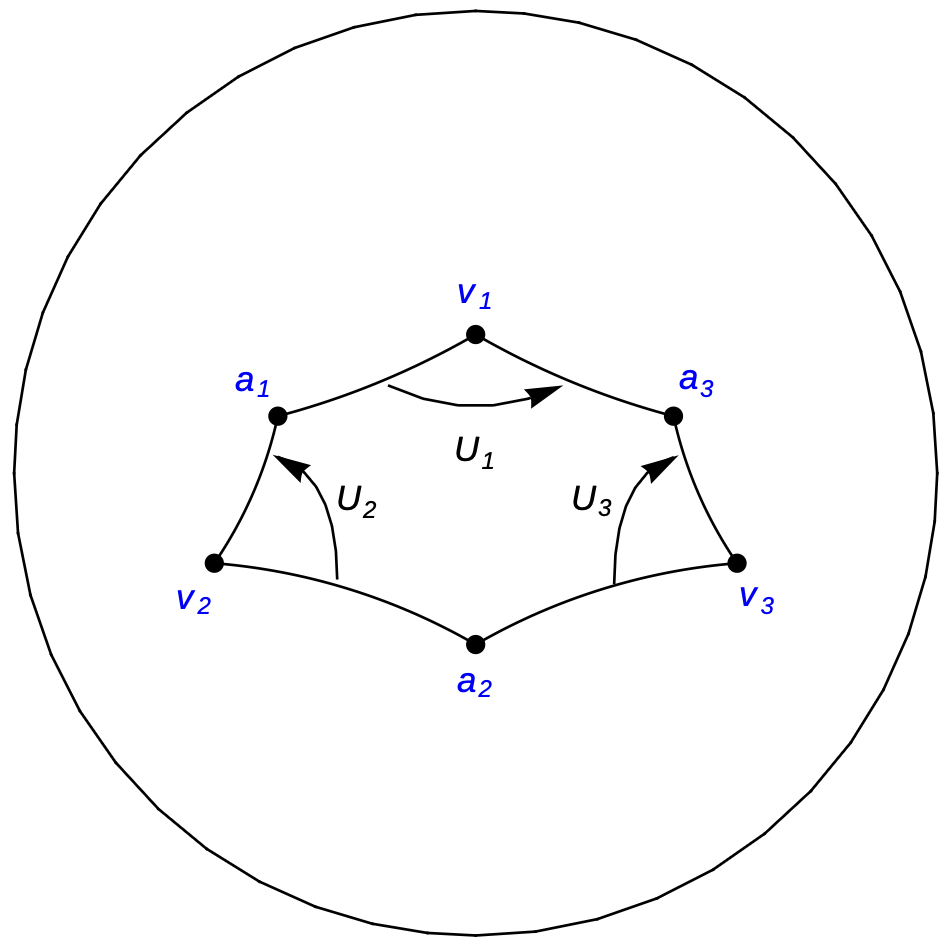} 
\caption{A hexagonal fundamental domain for the triangle group with signature $(6, 6, 3)$ and the side pairing elements $U_i$.}
\label{f:hexagonFD}
\end{figure}

\subsection{Chaotic Delone sets}

Delone sets have had a long history of study with respect to dynamics and aperiodic order. Chaotic Delone sets, introduced in \cite{ChaoticDeloneSet}, are an interesting subset of Delone sets and are defined as follows:

\begin{Def}\label{def:CD}
    Let $\varepsilon$ and $\delta$ denote two fixed positive real numbers.
    A subset $S$ of a metric space $(X,d)$ is \textit{$(\varepsilon,\delta)$-Delone} if the following two conditions hold.
    \begin{itemize}
        \item $\varepsilon$-relatively dense: For every $x\in X$ there exists $y \in X$ with $d(x,y)\leq\varepsilon$.
        \item $\delta$-separated: For distinct $x,y\in X$, we have that $d(x,y)\geq \delta$.
    \end{itemize}
    For $n \in \mathbb{N}$, we let $\operatorname{Del}_{\varepsilon,\delta}(n)$ denote the set of $(\varepsilon,\delta)$-Delone subsets of $\mathbb{R}^n$, and in the case when $n=1$, for ease of notation we will write $\operatorname{Del}_{\varepsilon,\delta}$.
\end{Def}

The set $\operatorname{Del}_{\varepsilon,\delta}(n)$ has a canonical, compact, metrisable topology, introduced in \cite{LocalRubberTop}, see also \cite[Page 9]{BAAKE_LENZ_2004}, such that the action of $\mathbb{R}^n$ given by 
\begin{align*}
    \mathbb{R}^n\times \operatorname{Del}_{\varepsilon,\delta}(n)&\rightarrow\operatorname{Del}_{\varepsilon,\delta}(n)\\
    (v,S)&\rightarrow S-v \, = \{ s -v : s \in S\} 
\end{align*}
is continuous. For $S\in\operatorname{Del}_{\varepsilon,\delta}(n)$ we write $[S]$ for the orbit $S+\mathbb{R}^n$ and let $\overline{[S]}$ denote the closure of this set in the local rubber topology. A Delone set is called \textit{periodic} if the orbit $[S]$ is compact.  We can now define what it means for a Delone set to be chaotic.

\begin{Def}\cite[Definition 1.2]{ChaoticDeloneSet}\label{def:ChaoticDeloneSet}
    A Delone set $S$ is \textit{almost chaotic} if the union of periodic orbits is dense in $\overline{[S]}$. The set $S$ is \textit{chaotic} if it is almost chaotic and aperiodic.  Recall, that we say that $S$ is \textit{aperiodic} if $S-v\neq S$ for all $v\in \mathbb{R}^n\setminus \{0\}$.
\end{Def}

For $n \in \mathbb{N}$ and $r$ a positive real number, we define
    \begin{align*}
    N_r(n)=\big\{(S,S')\in (\operatorname{Del}_{\varepsilon,\delta}(n))^{2} : S\cap B(0,r)\subseteq S'+B(0,1/r) \; \text{and} \; S'\cap B(0,r)\subseteq S+B(0,1/r)\big\},
    \end{align*}
where, for $x\in\mathbb{R}^n$ and $\eta$ a positive real number, $B(x,\eta) = \{ y : \lvert y-x \rvert < \eta \}$, and for a given set $K$, we let $K + B(0,\eta)$ denote the set $\eta$-parallel neighbourhood of $K$, that is, the set $\{ k + t : k \in K \; \text{and} \; t \in B(0,\eta) \}$. In the case when $n = 1$, for ease of notation, we write $N_{r}$ for $N_{r}(n)$. Moreover, for an arbitrary metric space, we use the same notation for an open ball centred at a given point $x$ of the metric space with radius $\eta$, and $\overline{B}(x,\eta)$ to denote the closure of $B(x,\eta)$.

With these definitions we can provide the following characterisation. This lemma is useful in verifying when a Delone set is almost chaotic.

\begin{Lem}[{\cite[Lemma 2.1]{ChaoticDeloneSet}}]\label{Lem: Chaotic Delone rework}
    An $(\epsilon,\delta)$-Delone set $S \subset \mathbb{R}^{n}$ is almost chaotic if and only if, for every $r\in\mathbb{N}$, there is a periodic Delone set $S'\in\operatorname{Del}_{\epsilon,\delta}(n)$ such that $(S,S')\in N_r(n)$, and, for any $s\in\mathbb{N}$ there exists a point $x\in\mathbb{R}^n$ such that $(S-x,S')\in N_s(n)$.
\end{Lem}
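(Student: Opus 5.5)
The plan is to read both conditions in terms of the uniform structure that defines the local rubber topology on $\operatorname{Del}_{\epsilon,\delta}(n)$. The sets $N_r(n)$, $r\in\mathbb{N}$, form a base of entourages for this uniformity (see \cite{LocalRubberTop,BAAKE_LENZ_2004}). So for fixed $T$ the sets $\{T' : (T,T')\in N_r(n)\}$ form a neighbourhood base at $T$. I will use two elementary facts about these entourages.

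\textbf{Approximate transitivity.} If $(S_1,S_2)\in N_{2r}(n)$ and $(S_2,S_3)\in N_{2r}(n)$, then $(S_1,S_3)\in N_r(n)$ for $r$ large. This is a triangle-inequality argument: a point of $S_1\cap B(0,r)$ is within $1/(2r)$ of a point of $S_2\cap B(0,2r)$, which in turn is within $1/(2r)$ of a point of $S_3$. Symmetry of $N_r(n)$ is immediate from the definition.

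\textbf{Translation estimate.} If $(S,S')\in N_R(n)$ and $|y|\le R/2$, then $(S-y,S'-y)\in N_{R/2}(n)$, because $B(0,R/2)+y\subseteq B(0,R)$.

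With these facts, the second condition in the lemma says precisely that $S'\in\overline{[S]}$: there are translates $S-x$ converging to $S'$. The first condition says that $S'$ lies in the $r$-th basic neighbourhood of $S$.

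\textbf{Forward direction.} Suppose $S$ is almost chaotic and fix $r$. Since $S\in\overline{[S]}$ and the union of periodic orbits is dense, there is a periodic $S'\in\overline{[S]}$ in the neighbourhood $\{T:(S,T)\in N_r(n)\}$. Because $S'\in\overline{[S]}$, for every $s$ some translate $S-x$ lies in the $s$-neighbourhood of $S'$. This gives $(S-x,S')\in N_s(n)$, which is the required condition.

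\textbf{Converse.} Let $T\in\overline{[S]}$ and $r\in\mathbb{N}$; I must find a periodic element of $\overline{[S]}$ that is $N_r$-close to $T$.
\begin{enumerate}
\item First pick $y$ with $(T,S-y)\in N_{2r}(n)$.
\item Then choose $R\ge \max(4r,2|y|)$ and apply the hypothesis with $R$, obtaining a periodic $S'$ with $(S,S')\in N_R(n)$ and $S'\in\overline{[S]}$.
\item By the translation estimate, $(S-y,S'-y)\in N_{R/2}(n)\subseteq N_{2r}(n)$.
\item By approximate transitivity, $(T,S'-y)\in N_r(n)$.
\end{enumerate}
Moreover $S'-y$ is periodic, since its orbit equals that of $S'$. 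It also lies in $\overline{[S]}$, because $\overline{[S]}$ is translation invariant by continuity of the $\mathbb{R}^n$-action. Hence periodic orbits are dense in $\overline{[S]}$.

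\textbf{Main obstacle.} The delicate point is the bookkeeping in the translation estimate and the approximate transitivity. The entourages $N_r(n)$ are not literally transitive, and translating by $y$ shifts the window $B(0,r)$. Both issues are handled by enlarging $r$ by a controlled factor, but it must be checked that $N_r(n)$ genuinely generates the local rubber topology. For this I would rely on the cited references rather than reprove it.
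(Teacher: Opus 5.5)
The paper does not prove this lemma. It imports it verbatim from \cite[Lemma 2.1]{ChaoticDeloneSet}, so there is no internal argument to compare against. Your proof is correct and self-contained.

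Symmetry, monotonicity in $r$, and your approximate transitivity show that $\{N_r(n)\}$ is a base of a uniformity. Approximate transitivity actually holds for every $r\ge 1$, since $r+1/(2r)\le 2r$, so ``for $r$ large'' costs nothing. The second clause of the condition is then exactly $S'\in\overline{[S]}$. The forward direction follows directly, provided the periodic orbits in the definition of almost chaotic are those lying in $\overline{[S]}$. That is the standard reading, and you need it, since otherwise the forward direction would not produce an $S'$ in the hull.

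Your converse can be made softer, with no translation estimate or transitivity bookkeeping. Let $P$ be the set of periodic elements of $\overline{[S]}$. The hypothesis says $S\in\overline{P}$. Translating a periodic set gives a periodic set, and the action is by homeomorphisms, so $\overline{P}$ is closed and $\mathbb{R}^n$-invariant. Hence $[S]\subseteq\overline{P}$, and therefore $\overline{[S]}\subseteq\overline{P}$. Your quantitative version gives the same conclusion, plus an explicit scale ($R\ge\max(4r,2|y|)$) at which the hypothesis must be invoked to approximate a given point of the hull.

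Both versions rely, as you note, on the $N_r(n)$ inducing the local rubber topology on $\operatorname{Del}_{\epsilon,\delta}(n)$. The lemma itself presupposes this, since it is phrased entirely in terms of $N_r(n)$.
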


\section{From hyperbolic polygons to chaotic Delone sets} \label{sec:Chaotic_Delone}

Throughout this section let $\Gamma$ be a cocompact Fuchsian group with fundamental domain $\tau$ a hyperbolic polygon, and let \mbox{$T_{1}, \cdots, T_{n}$} denote the generators of $\Gamma$ which are side pairings of $\tau$. Also assume that none of these generators are their own inverse. By \Cref{Lem:trans anosov flow}, the geodesic flow on $\operatorname{ST}(\Sigma_\Gamma)$ is transitive, and thus, there exists $(x, v) \in \operatorname{ST}(\Sigma_{\Gamma})$ with dense orbit under the flow. Let us fix such a point $(x_{0}, v_{0}) \in \operatorname{ST}(\Sigma_{\Gamma})$ and also fix $\ell$ to be the unique geodesic in $\mathbb{D}$ given by the parametrisation $\kappa_{(\widetilde{x}_{0}, \widetilde{v}_{0})}$, where $(\widetilde{x}_{0}, \widetilde{v}_{0}) \in \operatorname{ST}(\mathbb{D})$ is the unique point which lifts to $(x_{0}, v_{0}) \in \operatorname{ST}(\Sigma_{\Gamma})$, see \Cref{sec:GF}.  Note the uniqueness follows from the fact that the geodesic flow along $\ell$ is dense in $\operatorname{ST}(\Sigma_{\Gamma})$.  With this in mind, and for ease of exposition, we abuse notation and write 
$\kappa_{(x_{0}, v_{0})}$ in replace of $\kappa_{(\widetilde{x}_{0}, \widetilde{v}_{0})}$. We also, fix $x \in \tau^{\circ}$ and $\rho$ a positive real number.

To obtain our main result, \Cref{Thrm-Delone if and only if one sided tangency}, we adapt \cite[Lemmas 4.4, 4.8, 4.9 and 4.10]{ChaoticDeloneSet} to our setting.
The main difference between our proof, and those of \cite{ChaoticDeloneSet}, is that we do not consider two-sided tangency, but the readily verifiable condition that all geodesics in the Poincar\'e disc intersect $\Gamma (B(x,\rho))$.
Indeed, as we show in \Cref{l:two_odd_signatures_QuadDomain} and \Cref{l:hexDomain}, triangle groups have this latter property.
We include the statements since they give a strong understanding of the underlying geometry in play. Additionally we note that their proofs follow using the same techniques as in \cite{ChaoticDeloneSet} and thus are consigned to the appendix.

\begin{Lem}[{Analogue to \cite[Lemma 4.4]{ChaoticDeloneSet}}]\label{lem:Chaotic Behaviour of geodesic}
    Let $r \in \mathbb{R}$ positive be given.
    \begin{enumerate}
        \item There is a geodesic $k$ in $\mathbb{D}$, lifting to a closed geodesic on $\Sigma_\Gamma$ with $(S^+(\ell,\rho,x),S^+(k,\rho,x))\in N_r$.
        \item For an arbitrary geodesic $k$ in $\mathbb{D}$, there exists $a\in\mathbb{R}$ such that 
        \begin{align*}
        (S^+(\ell,\rho,x) -a,S^+(k,\rho,x))\in N_r.
        \end{align*}
    \end{enumerate}
\end{Lem}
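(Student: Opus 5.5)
The key observation is that $S^+(k,\rho,x)$ near a parameter value $t$ depends only on a compact piece of $k$ around $\kappa(t)$, up to the action of $\Gamma$. This piece is in turn determined, continuously, by the point $\varphi_t(\kappa(0),\kappa'(0))$ of the unit tangent bundle.

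More precisely, fix $r$. The set $S^+(k,\rho,x)\cap(t-r,t+r)$ is obtained as follows. Take the orbit points $\Gamma(x)$ lying in the compact region $K_t=\overline{\mathcal N(k,\rho)}^+\cap p_k^{-1}(\kappa([t-r,t+r]))$, project them onto $k$, and pull back by $\kappa$. Since $\Gamma$ acts by isometries commuting with the geodesic flow, applying $\gamma\in\Gamma$ transports everything equivariantly. Hence this finite configuration, translated by $-t$, depends only on the class of $\varphi_t(\kappa(0),\kappa'(0))$ in $\operatorname{ST}(\Sigma_\Gamma)$.

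\textbf{Step 1 (continuity).} I would show the following. Let $(y,w)$ and $(y',w')$ be two unit tangent vectors in $\operatorname{ST}(\mathbb D)$ that are $\eta$-close. Write $k,k'$ for the geodesics $\kappa_{(y,w)}$ and $\kappa_{(y',w')}$. Then on $[-r,r]$ the parametrisations are uniformly close, and the projections $p_k,p_{k'}$ are uniformly close on the compact set $\overline{B}(y,r+\rho+1)$. Consequently, for orbit points lying strictly inside, or strictly outside, the half-closed neighbourhood, the projected positions move by less than $1/r$.

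The delicate points are the orbit points lying \emph{exactly} on $\partial\mathcal N(k,\rho)$. The half-open convention $\overline{\mathcal N}^+$ (include $\partial^+$, exclude $\partial^-$) has the following effect. Points on $\partial^+$ are kept, and points on $\partial^-$ are dropped. A small perturbation in the appropriate normal direction, shifting $k'$ slightly to the $+$ side relative to $k$, makes the set of included orbit points in the compact window agree exactly. This works because $\Gamma(x)$ is discrete, so there are only finitely many orbit points in the window.

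So for each $(y,w)$ and $r$ there is an open set $U$ of tangent vectors, accumulating at $(y,w)$ from the appropriate side, on which the windows agree up to $1/r$. This is exactly the kind of statement needed for $N_r$, and I expect this one-sided boundary bookkeeping to be the main obstacle. A cleaner route may be to choose $U$ as a small neighbourhood of $\varphi_{\pm\epsilon}$-shifts of a vector whose geodesic is pushed inward normally by a tiny amount.

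\textbf{Step 2 (part (1)).} Take $(y,w)=(\kappa_{(x_0,v_0)}(0),\kappa'_{(x_0,v_0)}(0))$ and the open set $U$ from Step 1. By \Cref{Lem:trans anosov flow}, periodic points of the geodesic flow are dense in $\operatorname{ST}(\Sigma_\Gamma)$, so $U$ projects to an open set containing a periodic point. Lifting this point into $U$ gives a geodesic $k$ which projects to a closed geodesic on $\Sigma_\Gamma$. By Step 1, $(S^+(\ell,\rho,x),S^+(k,\rho,x))\in N_r$.

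\textbf{Step 3 (part (2)).} Given an arbitrary $k$, apply Step 1 at its base vector to get an open set $U$ around, or adjacent to, it. Since the $\varphi$-orbit of $(x_0,v_0)$ is dense in $\operatorname{ST}(\Sigma_\Gamma)$, there are $a\in\mathbb R$ and $\gamma\in\Gamma$ with $\gamma\varphi_a(\widetilde x_0,\widetilde v_0)\in U$. Equivariance then gives that $S^+(\ell,\rho,x)-a$ agrees with $S^+(\gamma^{-1}\cdot)$ on the window up to $1/r$. Since $\Gamma(x)$ is $\Gamma$-invariant, this yields $(S^+(\ell,\rho,x)-a,S^+(k,\rho,x))\in N_r$.
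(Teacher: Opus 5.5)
Your proposal is correct and follows essentially the same route as the paper. For (1) you approximate the base vector of $\ell$ by a periodic vector, and for (2) you hit a neighbourhood of the base vector of $k$ with some $\gamma\varphi_a$ of the dense orbit; in both cases the finiteness of $\Gamma(x)$ in a compact window and continuity of the projections give membership in $N_r$. Your one-sided normal push, which makes the included orbit points agree exactly on both the $\partial^+$ and $\partial^-$ boundaries, is a more careful version of what the paper encodes in its conditions (i) and (ii) and its ``analogous argument'' for the reverse inclusion.
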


\begin{Lem}[{Analogue to \cite[Lemma 4.8]{ChaoticDeloneSet}}]\label{lem: ap} \mbox{ }
        \begin{enumerate}
            \item If $\rho<\operatorname{inj}(\Gamma,x)$, then $S^+(\ell,\rho,x)$ is $\delta$-separated, where $\delta=2(\operatorname{inj}(\Gamma,x)-\rho)$.
            \item If $\operatorname{inj}(\Gamma,x) \leq \rho$, then $S^+(\ell,\rho,x)$ is not $\delta$-separated for all $\delta\in\mathbb{R}$ positive.
        \end{enumerate}
\end{Lem}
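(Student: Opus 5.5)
For part (1), take two distinct points $s<t$ of $S^+(\ell,\rho,x)$. There are orbit points $y=\gamma(x)$ and $z=\gamma'(x)$ in $\overline{\mathcal{N}(\ell,\rho)}^+$ with $p_\ell(y)=\kappa(s)$ and $p_\ell(z)=\kappa(t)$. Since $\ell$ is a unit speed geodesic, $t-s=d(p_\ell(y),p_\ell(z))$, so it suffices to bound $d(p_\ell(y),p_\ell(z))$ from below. I first note that $y\neq z$, because the projection is a function. Hence, by the definition of injectivity radius, $d(y,z)\geq 2\operatorname{inj}(\Gamma,x)$. Moreover $d(y,p_\ell(y))\leq\rho$ and $d(z,p_\ell(z))\leq\rho$, since the points lie in $\mathcal{N}(\ell,\rho)$ or on its boundary component $\partial^+\mathcal{N}(\ell,\rho)$, where the distance to $\ell$ equals $\rho$. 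The triangle inequality then gives
\begin{align*}
d(p_\ell(y),p_\ell(z))\geq d(y,z)-d(y,p_\ell(y))-d(z,p_\ell(z))\geq 2\operatorname{inj}(\Gamma,x)-2\rho=\delta,
\end{align*}
which is positive because $\rho<\operatorname{inj}(\Gamma,x)$. The one subtlety is when the orbit points do not project to distinct points; this is handled below.

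There is an issue in part (1): two distinct orbit points could have the same projection. Then $S^+$ is still a set, and separation concerns only distinct elements of $S^+$. So for distinct elements $s\neq t$, the preimage orbit points are automatically distinct, and the bound above applies. This settles (1).

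For part (2), suppose $\operatorname{inj}(\Gamma,x)\leq\rho$. By cocompactness the orbit $\Gamma(x)$ is discrete and the infimum in the injectivity radius is attained: there exist $y\neq z$ in $\Gamma(x)$ with $d(y,z)=2\operatorname{inj}(\Gamma,x)\leq 2\rho$. Let $g$ be the geodesic through $y$ and $z$ and $m$ its midpoint. I intend to use the density of the geodesic flow along $\ell$. Choose a unit vector at $m$ orthogonal to $g$, oriented so that $y$ and $z$ lie within distance at most $\rho$ of the geodesic it determines, and on the closed positive side if one of them is at distance exactly $\rho$. Denote this geodesic by $k_0$. Both $y$ and $z$ project to $m$ under $p_{k_0}$, since $k_0$ is perpendicular to $g$ at $m$ and $d(y,m)=d(z,m)\leq\rho$. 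Perturbing the direction slightly keeps both points in $\mathcal{N}(k_0,\rho)$ while making their projections distinct but arbitrarily close.

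The boundary case $d(y,m)=\rho$ has the points on the boundary. To avoid this, I tilt $k_0$ slightly at $m$ instead of taking it exactly perpendicular. Then the distances from $y$ and $z$ to $k_0$ become strictly less than $d(y,m)\leq\rho$, and the projections are distinct and close, with separation tending to $0$ as the tilt tends to $0$.

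Given $\delta>0$, fix such a tilted geodesic $k_1$ for which $y$ and $z$ lie strictly inside $\mathcal{N}(k_1,\rho)$ and their projections are at distance less than $\delta/2$. This is an open condition on $\operatorname{ST}(\mathbb{D})$. By the density of the orbit of $(x_0,v_0)$ under the geodesic flow in $\operatorname{ST}(\Sigma_\Gamma)$, there exist $\gamma\in\Gamma$ and $t$ such that $\gamma\varphi_t(\widetilde{x}_0,\widetilde{v}_0)$ lies in this open neighbourhood of a unit vector on $k_1$. Hence $\gamma(\ell)$ satisfies the same open condition. Applying $\gamma^{-1}$, the orbit points $\gamma^{-1}y$ and $\gamma^{-1}z$ lie in $\mathcal{N}(\ell,\rho)$, with distinct projections onto $\ell$ at distance less than $\delta$. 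Therefore $S^+(\ell,\rho,x)$ is not $\delta$-separated. The main obstacle is this continuity and openness step, namely that the projection distance and the membership in $\mathcal{N}$ depend continuously on the unit tangent vector. This is routine but must be stated carefully.
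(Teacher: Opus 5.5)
Your proof is correct and is essentially the paper's argument. Part (1) is the same triangle-inequality bound $2\operatorname{inj}(\Gamma,x)\leq d(y,z)\leq 2\rho+|s-t|$, and part (2) likewise takes a closest pair of orbit points and a geodesic through their midpoint (perpendicular, or tilted in the boundary case $\rho=\operatorname{inj}(\Gamma,x)$), then uses density of the geodesic flow to bring a translate $\gamma(\ell)$ close to it. Tilting in every case and putting $0<d(p_{k}(y),p_{k}(z))<\delta/2$ into the open condition is slightly cleaner than the paper's appeal to $\gamma(\ell)\neq k$ for distinct projections; your first suggestion of orienting $k_0$ so both points lie on the closed positive side cannot work (they lie on opposite sides of $k_0$), but the tilt makes that irrelevant.
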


\begin{Lem}[{Analogue to \cite[Lemma 4.9]{ChaoticDeloneSet}}]\label{lem: appp}
If all geodesics on $\Sigma_\Gamma$ intersect $B(\pi(x),\rho)$, then the set $S^+(\ell,\rho,x)$ is $\varepsilon$-relatively dense for some real $\varepsilon>0$.
\end{Lem}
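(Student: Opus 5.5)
We show that consecutive points of $S^+(\ell,\rho,x)$ are at most a uniform distance $2\varepsilon$ apart; every $t\in\mathbb{R}$ then lies within $\varepsilon$ of a point of $S^+(\ell,\rho,x)$. The argument has a compactness step and a geometric step.

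\emph{Compactness step.} For $(y,w)\in\operatorname{ST}(\Sigma_\Gamma)$ let $T(y,w)\in[0,\infty]$ be the first time $t\geq 0$ at which the projected geodesic $\pi(\kappa_{(y,w)}(t))$ lies in the open ball $B(\pi(x),\rho)$. By hypothesis every geodesic on $\Sigma_\Gamma$ meets $B(\pi(x),\rho)$, but this only gives $T<\infty$ in one time direction. We therefore work with the two-sided quantity
\begin{align*}
T^{*}(y,w)=\min\{T(y,w),\,T(y,-w)\},
\end{align*}
which is finite everywhere. Because the ball is open and the geodesic flow is continuous, the set $\{(y,w):\pi(\kappa_{(y,w)}(t))\in B(\pi(x),\rho)\}$ is open for each fixed $t$. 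Hence $T^{*}$ is upper semicontinuous, and on the compact space $\operatorname{ST}(\Sigma_\Gamma)$ it is bounded by some $M<\infty$.

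The two-sided bound is not enough for gaps: a geodesic might enter the ball only in the past and then avoid it for all $t\geq 0$. We must rule this out. Suppose, for contradiction, that for each $n$ there is a point $(y_n,w_n)$ with $T(y_n,w_n)\geq n$. Pass to the flowed points $\varphi_{n/2}(y_n,w_n)$; each avoids the ball on the time window $[-n/2,n/2]$. Take a convergent subsequence in $\operatorname{ST}(\Sigma_\Gamma)$. The limit geodesic avoids the open ball for all $t\in\mathbb{R}$, since avoiding an open set is a closed condition on each finite time window. This contradicts the hypothesis. So $T$ itself is bounded by some $M'$.

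\emph{Geometric step.} Apply this bound along $\ell$. For every $s\in\mathbb{R}$ there is $t\in[s,s+M']$ and $\gamma\in\Gamma$ with $d(\kappa_{(x_0,v_0)}(t),\gamma(x))<\rho$, so $\gamma(x)\in\mathcal{N}(\ell,\rho)$. The orthogonal projection $p_\ell$ is $1$-Lipschitz, so
\begin{align*}
d\bigl(p_\ell(\gamma(x)),\kappa_{(x_0,v_0)}(t)\bigr)<\rho.
\end{align*}
This gives a point of $S^+(\ell,\rho,x)$ in $[s-\rho,\,s+M'+\rho]$. Consecutive points of $S^+(\ell,\rho,x)$ are therefore at most $M'+2\rho$ apart, and we may take $\varepsilon=M'/2+\rho$. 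The one-sided closure in $\overline{\mathcal{N}(\ell,\rho)}^+$ is irrelevant here, since we only use orbit points in the open neighbourhood.

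The main obstacle is the compactness argument in its one-sided form. We expect to handle it as described, by recentring at time $n/2$ and passing to a limit geodesic that misses the open ball entirely. The only care needed is to use the open ball, so that missing it on each finite window is preserved under limits.
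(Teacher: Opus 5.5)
Your proof is correct and follows the paper's route: the paper's function $f$ is exactly your $T^{*}$, bounded by upper semicontinuity on the compact space $\operatorname{ST}(\Sigma_\Gamma)$. Your geometric step (lifting to $d(\kappa_{(x_0,v_0)}(t),\gamma(x))<\rho$ and using that $p_\ell$ is $1$-Lipschitz) spells out what the paper leaves implicit. Your one-sided recentring argument is valid but unnecessary, and your claim that the two-sided bound is not enough is mistaken. If $T(y,w)\geq n$, then $T^{*}(\varphi_{n/2}(y,w))\geq n/2$, so $T\leq 2M$ follows at once from $T^{*}\leq M$. Equivalently, applying $T^{*}\leq M$ at every point of $\ell$ already gives a hit in every window $[s-M,s+M]$.
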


\begin{Lem}[{Analogue to \cite[Lemma 4.10]{ChaoticDeloneSet}}]\label{lem: apppp}
    If all geodesics on $\Sigma_\Gamma$ intersect $B(\pi(x),\rho)$, then the set $S^+(\ell,\rho,x)$ is aperiodic.
\end{Lem}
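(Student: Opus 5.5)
We argue by contradiction: suppose $S^+(\ell,\rho,x)$ is periodic, so there is $a>0$ with $S^+(\ell,\rho,x)+a=S^+(\ell,\rho,x)$, and aim to produce a nontrivial element of $\Gamma$ that translates $\ell$ along itself by $a$. Such an element would be hyperbolic with axis $\ell$, so $\ell$ would project to a closed geodesic on $\Sigma_\Gamma$. But then the orbit of $(x_0,v_0)$ under the geodesic flow would be a closed curve, which cannot be dense in the $3$-manifold $\operatorname{ST}(\Sigma_\Gamma)$. This contradicts the choice of $\ell$.

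\textbf{Step 1 (local data is finite).} By \Cref{lem: appp} together with the discreteness of $\Gamma(x)$, every window of $S^+(\ell,\rho,x)$ of fixed length $R$ is finite. Each point $s$ of the set comes from an orbit point $\gamma(x)\in\overline{\mathcal{N}(\ell,\rho)}^+$, which is recorded by its signed distance from $\ell$. Hence near $\kappa(s)$ the configuration of orbit points in $\overline{\mathcal{N}(\ell,\rho)}^+$ is encoded by a finite list of pairs (position along $\ell$, signed height).

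\textbf{Step 2 (periodicity forces the point pattern to recur).} Using the hypothesis that every geodesic meets $\Gamma(B(x,\rho))$, we want to show that a translation by $a$ of the projected set forces the corresponding configuration of orbit points in the strip to be invariant under the hyperbolic isometry $h_a$ of translation length $a$ along $\ell$. A useful sanity check is that the trivial $\Gamma$-invariance of $\Gamma(x)$ gives no information here, because the strip is not $\Gamma$-invariant; the periodicity hypothesis is therefore what carries all the content.

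To prove this, we use density of the flow orbit. The strip configuration seen at a point $\kappa(t)$ is a continuous function of the flow point $\varphi_t(x_0,v_0)\in\operatorname{ST}(\Sigma_\Gamma)$, away from a boundary set where an orbit point lies exactly on $\partial^+$ or exits through the open side. Periodicity of the projections means the projected pattern seen from $\varphi_t$ and from $\varphi_{t+a}$ coincide for all $t$. By density, the same holds for all $(y,w)\in\operatorname{ST}(\Sigma_\Gamma)$ in a dense set. We then pick $(y,w)$ tangent to a closed geodesic that passes exactly through an orbit point, which exists since periodic points are dense by \Cref{Lem:trans anosov flow}. The heights of orbit points near that geodesic vary as we perturb $(y,w)$, but the projected positions must stay matched under a shift by $a$. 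This should force the matching to be by an isometry, namely an element $g\in\Gamma$ with $g=h_a$ on $\ell$.

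\textbf{Step 3 (conclude).} Once $g\in\Gamma$ maps $\ell$ to itself with translation $a$, $g$ is hyperbolic with axis $\ell$, which gives the contradiction described above. The hypothesis of \Cref{lem: appp} also guarantees that $S^+$ is nonempty in every long window. This rules out the degenerate case in which the set is empty or finite and hence trivially "periodic".

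The main obstacle is Step 2: upgrading equality of one-dimensional projections to an actual group element. The plan is to follow the approach of \cite[Lemma 4.10]{ChaoticDeloneSet}. There, aperiodicity is derived from the ability to approximate arbitrary local configurations (\Cref{lem:Chaotic Behaviour of geodesic}(2)). Applying that lemma to a geodesic $k$ whose cut and project set near $0$ has two points at a distance not appearing elsewhere at the relevant scale produces a pattern that occurs in $S^+(\ell,\rho,x)$ without recurring with period $a$. In particular, one can choose $k$ nearly tangent to $\partial\mathcal{N}$ of some orbit point, so that a point appears or disappears under small perturbation. Choosing $k$ non-closed, or with an incommensurable length, should break the $a$-periodicity and give the contradiction directly.
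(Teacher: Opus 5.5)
Your main route breaks at Step 2, as you suspect. The equality $S^+(\ell,\rho,x)+a=S^+(\ell,\rho,x)$ is a statement about projections only. The map $p_\ell$ discards the signed heights of the orbit points in $\overline{\mathcal{N}(\ell,\rho)}^+$, so nothing forces the strip configuration to be invariant under the translation $h_a$ along $\ell$, let alone forces $h_a$ to agree with some $g\in\Gamma$ on $\ell$. The perturbation sentence (``the projected positions must stay matched under a shift by $a$. This should force the matching to be by an isometry'') is where the proof would have to happen, and nothing there produces an element of $\Gamma$.

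The part of Step 2 that does work is the density transfer. Apply \Cref{lem:Chaotic Behaviour of geodesic}\,(2), let $r\to\infty$ in $(S^+(\ell,\rho,x)-a_r,S^+(k,\rho,x))\in N_r$, and use that $S^+(k,\rho,x)$ is locally finite. Local finiteness comes from discreteness of $\Gamma(x)$ alone, not from \Cref{lem: appp} as you claim in Step 1. This shows every $S^+(k,\rho,x)$ is $a$-periodic. That is exactly the paper's first step, but you then aim at producing a group element rather than at the contradiction that is actually available.

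The missing idea is arithmetic. Let $k$ be the axis of a hyperbolic $g\in\Gamma$. Then $g$ fixes both endpoints of $k$, is orientation preserving, and preserves $\Gamma(x)$. Hence it maps $\overline{\mathcal{N}(k,\rho)}^+$ to itself and commutes with $p_k$, so $S^+(k,\rho,x)$ is also $l(k)$-periodic. This set is nonempty by the hypothesis and locally finite, so its group of periods is cyclic. Therefore $a/l(k)\in\mathbb{Q}$ for every closed geodesic, i.e.\ $L(\Gamma)\subset\mathbb{Q}a$. The paper contradicts this with \cite[Proposition 1.8]{realspectrumcompactificationcharacter}, which gives infinitely many $\mathbb{Q}$-independent lengths in $L(\Gamma)$.

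Your last paragraph does not supply this input:
\begin{itemize}
\item ``Choosing $k$ with an incommensurable length'' presupposes exactly this fact without proving or citing it.
\item ``Choosing $k$ non-closed'' gives nothing, since such a $k$ has no intrinsic period to play against $a$.
\item Asking for a patch of $S^+(k,\rho,x)$ that ``does not recur with period $a$'' merely restates what must be proved.
\end{itemize}
Without some statement about rational independence in the length spectrum, or a genuinely completed version of your perturbation argument, the proof does not close.
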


By \Cref{lem:Chaotic Behaviour of geodesic,lem: ap,lem: appp,lem: apppp} in tandem with the characterisation of an almost chaotic Delone set in \Cref{Lem: Chaotic Delone rework} we obtain the following theorem. 
Note, \Cref{lem: ap}\,(2), demonstrates that this result is optimal in the sense that $\rho$ can not be chosen greater than or equal to $\operatorname{inj}(\Gamma,x)$.

\begin{Thm}\label{Thrm-Delone if and only if one sided tangency}
    If all geodesics on $\Sigma_\Gamma$ intersect $B(\pi(x),\rho)$ and $\rho<\operatorname{inj}(\Gamma,x)$, then $S^+(\ell,\rho,x)$ is Delone. Moreover, if $S^+(\ell,\rho,x)$ is Delone, then it is chaotic.
\end{Thm}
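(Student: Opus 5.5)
The Delone property is a direct combination of the two preceding lemmas. Assume every geodesic on $\Sigma_\Gamma$ meets $B(\pi(x),\rho)$ and $\rho<\operatorname{inj}(\Gamma,x)$. By \Cref{lem: ap}\,(1), $S^+(\ell,\rho,x)$ is $\delta$-separated with $\delta=2(\operatorname{inj}(\Gamma,x)-\rho)>0$. By \Cref{lem: appp}, it is $\varepsilon$-relatively dense for some $\varepsilon>0$. Hence $S^+(\ell,\rho,x)\in\operatorname{Del}_{\varepsilon,\delta}$, which proves the first assertion.

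For the second assertion, suppose $S=S^+(\ell,\rho,x)$ is Delone, say $(\varepsilon,\delta)$-Delone. By \Cref{lem: ap}\,(2), being separated forces $\rho<\operatorname{inj}(\Gamma,x)$. I also need the covering hypothesis. I would recover it from relative density: if some geodesic $k$ on $\Sigma_\Gamma$ avoided $B(\pi(x),\rho)$, then \Cref{lem:Chaotic Behaviour of geodesic}\,(2) gives translates of $S$ that agree with $S^+(k,\rho,x)=\emptyset$ on arbitrarily large windows. This contradicts $\varepsilon$-relative density. Should the paper instead intend the hypothesis of the first sentence to be in force throughout, this step is simply assumed.

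Aperiodicity then follows from \Cref{lem: apppp}. It remains to check almost chaoticity via \Cref{Lem: Chaotic Delone rework}. Fix $r\in\mathbb{N}$. By \Cref{lem:Chaotic Behaviour of geodesic}\,(1), there is a geodesic $k$ lifting to a closed geodesic with $(S,S^+(k,\rho,x))\in N_r$; set $S'=S^+(k,\rho,x)$. Since $k$ projects to a closed geodesic, the corresponding hyperbolic element $\gamma\in\Gamma$ preserves $k$, acts on it as translation by $l(\gamma)$, and preserves $\Gamma(x)$ and $\overline{\mathcal N(k,\rho)}^+$ (it preserves the orientation of the normal bundle). Therefore $S'+l(\gamma)=S'$, so $S'$ is periodic and $[S']$ is a circle, hence compact. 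Finally, for any $s\in\mathbb{N}$, applying \Cref{lem:Chaotic Behaviour of geodesic}\,(2) to this same $k$ yields $a$ with $(S-a,S')\in N_s$. This is exactly the criterion of \Cref{Lem: Chaotic Delone rework}, so $S$ is almost chaotic, and together with aperiodicity it is chaotic.

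The main obstacle is ensuring that $S'$ lies in the same class $\operatorname{Del}_{\varepsilon,\delta}$, as \Cref{Lem: Chaotic Delone rework} requires. Separation follows from \Cref{lem: ap}\,(1), since its proof only uses $\rho<\operatorname{inj}(\Gamma,x)$ and not the choice of geodesic. Relative density of $S'$ with the same $\varepsilon$ I would obtain by approximation: each gap of $S'$ is approximated by a gap of $S$ via \Cref{lem:Chaotic Behaviour of geodesic}\,(2) with large $s$. Because $S'$ is periodic, finitely many windows cover a period, so the gaps of $S'$ are at most $\varepsilon$, with $\varepsilon$ enlarged slightly if necessary (the class is insensitive to this for the criterion).
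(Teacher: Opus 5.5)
Your main line is the paper's proof. \Cref{lem: ap}\,(1) and \Cref{lem: appp} give the Delone property, and \Cref{lem:Chaotic Behaviour of geodesic}\,(2) transfers the $(\varepsilon,\delta)$ constants to $S^+(k,\rho,x)$ for closed $k$, which is periodic. Then \Cref{Lem: Chaotic Delone rework} with \Cref{lem:Chaotic Behaviour of geodesic} and \Cref{lem: apppp} gives chaoticity. The paper reads the second sentence with the first sentence's hypotheses still in force, since it needs the covering hypothesis for \Cref{lem: apppp}. So your fallback is exactly what is proved there.

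Drop the optional step where you recover the covering hypothesis from relative density, because it fails as written. A geodesic $k$ that misses the open ball $B(\pi(x),\rho)$ need not have $S^+(k,\rho,x)=\emptyset$: orbit points at distance exactly $\rho$ on the positive side lie in $\partial^+\mathcal{N}(k,\rho)$ and are counted. If $k$ touches $\Gamma(\overline{B}(x,\rho))$ from both sides, reversing the orientation of $k$ does not empty the set either. This is precisely the two-sided tangency that Condition B of \cite{ChaoticDeloneSet} permits, so you should not expect to recover the open-ball hypothesis from the Delone property.

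If you want a genuinely stand-alone ``Delone implies chaotic'', bypass the covering hypothesis for aperiodicity. Suppose $S$ were periodic with period group $\omega\mathbb{Z}$. Then $[S]$ is compact, so $\overline{[S]}=[S]$. By \Cref{lem:Chaotic Behaviour of geodesic}\,(2), together with your transfer of the $(\varepsilon,\delta)$ constants, every $S^+(k,\rho,x)$ with $k$ closed is a translate of $S$. Since $l(k)$ is a period of that set, all closed geodesic lengths lie in $\omega\mathbb{Z}$. This contradicts \cite[Proposition 1.8]{realspectrumcompactificationcharacter}.
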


\begin{proof}
    By \Cref{lem: appp}\, and \Cref{lem: ap}\, (1), if all geodesics on $\Sigma_\Gamma$ intersect $B(\pi(x),\rho)$ and $\rho<\operatorname{inj}(\Gamma,x)$, then $S^+(\ell,\rho,x)$ is Delone for some $\varepsilon,\delta \in \mathbb{R}$. Therefore, \Cref{lem:Chaotic Behaviour of geodesic}\,(2), implies for any geodesic $k$, lifting to a closed geodesic on $\Sigma_\Gamma$, the set $S^+(k,\rho,x)$ is also $(\varepsilon,\delta)$-Delone. Further, since $k$ lifts to a closed geodesic, we observe that  $S^+(k,\rho,x)$ is additionally periodic Delone. Using the characterisation of an almost chaotic Delone set in \Cref{Lem: Chaotic Delone rework}, a direct application of \Cref{lem:Chaotic Behaviour of geodesic} and \Cref{lem: apppp} yields the required result.
\end{proof}

It is natural to consider a point $x\in\tau$ equidistant to all sides of the polygon $\tau$, if such a point exists. When making this restriction, it can be shown that there exists $\rho>0$ such that $(x,\rho)$ satisfies the conditions of \Cref{Thrm-Delone if and only if one sided tangency} dependant only on a simple to verify condition on the sides of the polygon, circumventing Condition B. Hence, showing that for the setting given, the cut and project set $S^+(\ell,x,\rho)$ is chaotic Delone. This is precisely our main result \Cref{Nice condition for aperiodic delone sets}. Before, stating \Cref{Nice condition for aperiodic delone sets}, we require one further definition.  If $m$ is a side of $\tau$, we call the geodesic containing $m$, an \textit{extended side} and denote it by $\mathfrak{m}$.

\begin{Thm}\label{Nice condition for aperiodic delone sets}
    Suppose there exists a point $y\in \tau^{\circ}$ such that the closed ball $\overline{B}(y,\mu_y)$ is tangent to all sides of $\tau$, where for ease of notation $\mu_{y} = \operatorname{inj}(\Gamma,y)$. There exists $\rho<\mu_y$ such that $S^+(\ell,\rho,y)$ is chaotic Delone if and only if all extended sides of the polygon $\tau$ intersect $\Gamma(\tau^\circ)$.
\end{Thm}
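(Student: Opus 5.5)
The plan is to reduce both directions to \Cref{Thrm-Delone if and only if one sided tangency} and \Cref{lem: ap}, using the inscribed ball $\overline{B}(y,\mu_y)$ to translate ``geodesics meeting $\Gamma(B(y,\rho))$'' into a statement about extended sides. Throughout we work in $\mathbb{D}$: a geodesic on $\Sigma_\Gamma$ meets $B(\pi(y),\rho)$ if and only if each (equivalently, some) lift meets $\Gamma(B(y,\rho))$.

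First, the geometry of the tangency. Since $\overline{B}(y,\mu_y)$ is tangent to every side of $\tau$, its translates $\gamma(\overline{B}(y,\mu_y))$, $\gamma\in\Gamma$, have pairwise disjoint interiors. Two balls are tangent exactly when their images lie in adjacent copies of $\tau$ and touch at the common side. Thus a geodesic $k$ that avoids $\Gamma(B(y,\mu_y))$ entirely must avoid every open inscribed ball, and so can only touch the closed balls tangentially.

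The key claim is the following: a geodesic avoiding $\Gamma(B(y,\mu_y))$ must be a $\Gamma$-translate of an extended side $\mathfrak{m}$ that is disjoint from $\Gamma(\tau^\circ)$, i.e.\ contained in the union of edges of the tessellation. To prove it, take such a $k$ and a tile $\gamma\tau$ whose interior $k$ enters. Inside $\gamma\tau$, the geodesic $k$ must separate the inscribed ball from some side, or else run tangent to the ball. Then I would show, using that the ball touches every side and that $\gamma\tau$ is convex, that any chord of the tile avoiding the open inscribed ball passes close to a vertex. Iterating along $k$ through successive tiles, together with compactness (finitely many tile shapes) and the fact that $k$ never meets an open ball, should force $k$ not to enter any tile interior at all. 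Hence $k\subset\bigcup_\gamma\gamma(\partial\tau)$, so $k$ equals an extended side of some $\gamma\tau$ and lies in no tile interior. This is the step I expect to be the main obstacle. The worry is geodesics that weave through tiles while staying outside all inscribed balls, and I would try to rule them out by a uniform-angle argument at the tangency points.

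With the claim in hand, both directions follow. Suppose every extended side meets $\Gamma(\tau^\circ)$. An extended side that enters some open tile crosses it as a chord, so it passes within distance less than $\mu_y$ of that tile's centre. Since there are finitely many sides and $\Gamma$ acts cocompactly, there is some $\rho<\mu_y$ such that every geodesic meets $\Gamma(B(y,\rho))$. For geodesics that are not extended sides, the claim gives that they meet $\Gamma(B(y,\mu_y))$. A compactness argument on the space of geodesics modulo $\Gamma$ then upgrades this to a uniform $\rho<\mu_y$. The limiting geodesics that only touch the balls tangentially are extended sides, which are already handled. \Cref{Thrm-Delone if and only if one sided tangency} then shows that $S^+(\ell,\rho,y)$ is chaotic Delone.

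Conversely, suppose some extended side $\mathfrak{m}$ misses $\Gamma(\tau^\circ)$. Then $\mathfrak{m}$ lies at distance at least $\mu_y$ from every point of $\Gamma(y)$, so for every $\rho<\mu_y$ it misses $\Gamma(\overline{\mathcal{N}}) $-type neighbourhoods of the orbit. By \Cref{lem:Chaotic Behaviour of geodesic}(2) applied with $k=\mathfrak{m}$, for any $r$ there is a translate of $S^+(\ell,\rho,y)$ that is $N_r$-close to the empty set $S^+(\mathfrak{m},\rho,y)$. Hence $S^+(\ell,\rho,y)$ has arbitrarily long gaps and is not relatively dense, so it is not Delone.
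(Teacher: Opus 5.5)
Your forward direction is the paper's argument and is fine. Since $\mathfrak m$ lies in the $1$-skeleton, $S^+(\mathfrak m,\rho,y)=\emptyset$ for $\rho<\mu_y$, and \Cref{lem:Chaotic Behaviour of geodesic}\,(2) then gives translates of $S^+(\ell,\rho,y)$ with no points in $(-r,r)$.

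Your compactness upgrade is also sound, and is a softer route than the paper's explicit construction of $\rho$ via the geodesics $\mathfrak{q}_i^{\pm}$. The map $k\mapsto d(k,\Gamma(y))$ is an infimum of continuous functions, hence upper semicontinuous and $\Gamma$-invariant on the compact space $\operatorname{ST}(\Sigma_\Gamma)$. So if every geodesic meets $\Gamma(B(y,\mu_y))$, its maximum is some $\rho_0<\mu_y$, and any $\rho\in(\rho_0,\mu_y)$ works; no separate treatment of ``limiting tangential geodesics'' is needed.

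The gap is that the whole reverse direction rests on your key claim, and you do not prove it. ``Passes close to a vertex'', ``iterating through successive tiles'' and a ``uniform-angle argument'' do not give a mechanism that excludes the weaving geodesics you yourself flag. Your justification for extended sides is also wrong as stated: a chord of a tile can cut off a corner without entering the inscribed ball, so crossing a tile does not put a geodesic within $\mu_y$ of its centre. What is true, and what the paper uses, is that an extended side meeting $\Gamma(\tau^\circ)$ leaves the $1$-skeleton at a vertex. A geodesic entering a tile's interior from a vertex must meet that tile's open inscribed ball. In the same sentence, ``every geodesic'' should read ``every extended side''.

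The claim is in fact exactly true, with a short proof, if you use (as the paper does) that the inscribed balls of two tiles sharing a side touch that side at the same point. In a tile $A$, the complement of the open inscribed ball is the union of closed corner regions $C_v^A$, one per vertex $v$. Each is bounded by the two side segments from $v$ to the tangency points and an arc of the circle, and these regions meet only at tangency points. You need two facts.
(a) A geodesic through a tangency point $z$, other than the extended side through $z$, crosses the circle transversally at $z$ and so enters an open ball.
(b) A geodesic through a vertex $v$ entering $A^\circ$ cannot leave the bounded region $C_v^A$ through the sides at $v$, since it already met those lines at $v$. So it must cross the arc into the open ball.

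Now let $k$ avoid $\Gamma(B(y,\mu_y))$ and meet some $A^\circ$. Then $k$ meets some $C_v^A$ at a point other than $v$. Let $U_v$ be the union of the corner regions at $v$ of the finitely many tiles containing $v$; this is a compact neighbourhood of $v$. By the matching of tangency points, each boundary point of $U_v$ either lies on an open arc, where leaving $U_v$ means entering an open ball, or is a tangency point on a side through $v$. Since $k$ must leave $U_v$, fact (a) forces $k$ to be the extended side through such a tangency point, so $v\in k$. But $k$ also meets $A^\circ$, so by convexity it enters $A^\circ$ from $v$, contradicting (b). Hence $k\subset\Gamma(\partial\tau)$, so $k$ is a translate of an extended side missing $\Gamma(\tau^\circ)$.

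With this lemma supplied, your proof is complete; without it, the reverse implication is unproved.
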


\begin{proof}
For the forward direction, if an extension $\mathfrak{m}$ of a side $m$ of $\tau$ does not intersect $\Gamma(\tau^\circ)$, then for any $\rho \in (0,\mu_y)$ the geodesic $\mathfrak{m}$ does not intersect $\Gamma(\overline{B}(y,\rho))\subseteq \Gamma(\tau^\circ)$ hence the set $S^+(\mathfrak{m},\rho,y)$ is empty.
Hence, by \Cref{lem: ap}\,(1), the set $S^+(\ell,\rho,y)$ has arbitrarily large empty patches and thus is not Delone for any $\rho<\mu_y$.

To prove the reverse direction we will verify that there exists a $\rho$, sufficiently close to $\mu_y$, such that all geodesics intersect the region $\Gamma(B(y,\rho))$. This being sufficient to verify the conditions in \Cref{Thrm-Delone if and only if one sided tangency} and hence that $S^+(\ell,\rho,y)$ is chaotic Delone.

Consider an arbitrary side $m_i$ and let $\mathfrak{m}_i$ denote the extended side of $m_{i}$, where $i\in\{1,\cdots,2n\}$. We parametrise $\mathfrak{m}_i$ with unit speed and such that $0$ is mapped to the singleton $m_i \cap \overline{B}(y,\mu_y)$.  We will abuse notation and use the symbol $\mathfrak{m}_{i}$ to denote the extended side of $m_{i}$ as well as its parametrisation. Moreover, we assume that the parametrisation is such that the normal direction of $m_i$ is directed towards the interior of the polygon $\tau$. Observe that since $\mathfrak{m}_i$ intersects  $\Gamma(\tau^\circ)$, it intersects $\Gamma(B(y,\mu_y))$. Indeed, this is the case since, by assumption, there must be two consecutive closed intervals $I_1, I_2 \subset \mathbb{R}$ with pairwise disjoint interiors, where $\mathfrak{m}_i(I_1)$ is a side of an image of $\tau$, say $\gamma_1(\tau)$ for some $\gamma_{1} \in \Gamma$, and $\mathfrak{m}_i(I_2)$ intersects an image of $\tau^\circ$, say $\gamma_2(\tau^\circ)$ for some $\gamma_{2} \in \Gamma$. Therefore, we have $\mathfrak{m}_{i}(I_1\cap I_2)$ is a vertex on $\gamma_1(\tau)\cap\gamma_2(\tau)$ and since $\mathfrak{m}_i(I_2)$ intersects $\gamma_2(\tau^\circ)$, and $\gamma_2(\overline{B}(y,\mu_y))$ is tangent to the sides of $\gamma_2(\tau)$, the geodesic $\mathfrak{m}_i$ must intersect $\gamma_2(B(y,\mu_y))$.

Thus, there exists $\gamma_{i} \in \Gamma$ such that $\mathfrak{m}_i$ intersects $\gamma_i(B(y,\mu_y))$ for some $\gamma_i\in\Gamma$. Therefore, there exist $s_i, \epsilon_i\in\mathbb{R}$ with $\epsilon_i$ positive, such that $d(\mathfrak{m}_i(s_i),\gamma_i(y))+\epsilon_i<\mu_y$. Setting $\epsilon=\min_i\{\epsilon_i\}$, noting since $\tau$ has finitely many sides, $\epsilon>0$, there exists $\delta \in \mathbb{R}$ such that for any $i \in\{1,\cdots,2n\}$, the inequality $d(\mathfrak{m}(s_i), \gamma_i(y))+\epsilon< \delta<\mu_y$ holds. Hence, 
    \begin{align*}
    B(\mathfrak{m}_i(s_i),\epsilon)\subset \overline{B}(\mathfrak{m}_i(s_i),\epsilon)\subseteq B(\gamma_i(y),\delta)=\gamma_i(B(y,\delta)).
    \end{align*}
For $i\in\{1,\cdots,2n\}$, let $\mathfrak{t}_{i,s_i}$ be the unique geodesic perpendicular to $\mathfrak{m}_i$ with $\mathfrak{t}_{i,s_i}(0) = \mathfrak{m}_i(s_i)$, parametrised with unit speed and such that $\mathfrak{t}_{i,s_i}'(0)$ is equal to the the normal of $\mathfrak{m}_i$ at $\mathfrak{m}_i(s_i)$. 
We define $\mathfrak{q}_i^\pm$ to be the unique geodesic passing through the points $\mathfrak{m}_i(0)$ and $\mathfrak{t}_{i,s_i}(\pm\epsilon)$. By construction, $\mathfrak{t}_{i,s_i}(\epsilon)$ and $\mathfrak{t}_{i,s_i}(-\epsilon)$ are contained in $\overline{B}(m_i(s_i),\epsilon)\subset B(\gamma_i(y),\delta)$. Therefore, $\mathfrak{q_i}^+$ and $\mathfrak{q}_i^-$ both intersect the ball $\overline{B}(\mathfrak{m}_i(s_i),\epsilon)\subset B(\gamma_i(y),\delta)$. We parametrise $\mathfrak{q}^+_i$ and $\mathfrak{q}^-_i$ such that $\mathfrak{q}^+_i(0)=\mathfrak{q}^-_i(0)=\mathfrak{m}_i(0)$ and such that their forward directions intersect $\overline{B}(\mathfrak{m}_i(s_i),\epsilon)$, see \Cref{f:boundinglines}.

\begin{figure}
\includegraphics[scale=0.7]{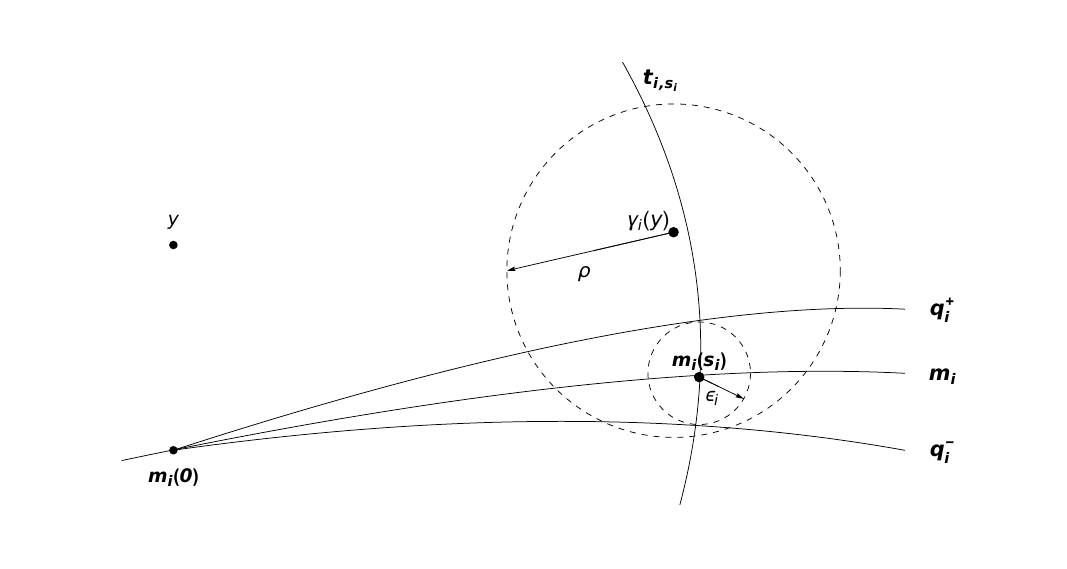} 
\caption{Sections of the geodesics $\mathfrak{q}_i^{\pm}$ passing through $\mathfrak{t}_{i,s_i}(\pm\epsilon)$ and intersecting the ball $\overline{B}(\mathfrak{m}_i(s_i),\epsilon)\subset B(\gamma_i(y),\rho)$.}
\label{f:boundinglines}
\end{figure}

For each $i\in\{1,\cdots,2n\}$, observe that $\mathfrak{q}_i^+$ makes a positive angle $\alpha_i^+$ with $\mathfrak{m}_i$ at $\mathfrak{m}_i(0)$ and $\mathfrak{q}_i^-$ makes a negative angle $\alpha_i^-$ with $\mathfrak{m}_i$ at $\mathfrak{m}_i(0)$. Letting $y_i$ denote the centre of the image of $\tau$ sharing the side $m_i$ with $\tau$, we have that $B(y_i,\mu_y)$ and $B(y,\mu_y)$ are both tangent to $m_i$. Therefore, since $\alpha_i^+$ and $\alpha_i^-$ are non-zero, there exists $\eta_i\in (0,\mu_y)$ such that $\mathfrak{q}_i^+([0,\infty))$ and $\mathfrak{q}_i^-((-\infty,0])$ intersect $B(y,\eta_i)$ and $\mathfrak{q}_i^-([0,\infty))$ and $\mathfrak{q}_i^+((-\infty,0])$ intersect $B(y_i,\eta_i)$. Setting $\eta=\max_{i}\{\eta_i\}$, the segments $\mathfrak{q}_i^+([0,\infty))$ and $\mathfrak{q}_i^-((-\infty,0])$ intersect $B(y,\eta)$, and the segments $\mathfrak{q}_i^-([0,\infty))$ and $\mathfrak{q}_i^+((-\infty,0])$ intersect $B(y_i,\eta)$, see \Cref{f:balltraj}.

We set $\rho=\max\{\delta,\eta\}<\mu_y$ and claim that all geodesics intersect $\Gamma (B(y,\rho))$. To see this claim, let $\mathfrak{k}$ be an arbitrary geodesic and assume, without loss of generality, that $\mathfrak{k}$ intersects $\tau$. For each $i\in\{1,\cdots,2n\}$, we set $a_i^+, a_i^- \in \mathbb{R}$ be such that $\mathfrak{q}_i^+(a_i^+),\mathfrak{q}_i^-(a_i^-)\in B(y,\rho)$, see \Cref{f:balltraj}.

We set $T=\cup_i(\mathfrak{t}_{i,0}([0,\mu_y])$, where $\mathfrak{t}_{i,0}$ is defined analogously to $\mathfrak{t}_{i,s_i}$. Observe that, for $\xi_1,\xi_2\in\Gamma$ such that $\xi_1(\tau)$ and $\xi_2(\tau)$ share a side $m$, letting $z$ denote the singleton 
    \begin{align*}
    m\cap \overline{B}(\xi_1(y),\mu_y)=m\cap\overline{B}(\xi_2(y),\mu_y),
    \end{align*}
$\xi_1(T)$ connects $\xi_1(y)$ to $z$, and $\xi_2(T)$ connects $\xi_2(y)$ to $z$. 
Thus, $\xi_1(T)\cup \xi_2(T)$ connects $\xi_1(y)$ to $\xi_2(y)$.

Let $G=\{g\in\Gamma\setminus \{I\}:g(\tau)\cap\tau\neq\emptyset\}$ and observe that by construction the set $\cup_{g\in G}\,g(T)$ contains a simple loop encompassing $\tau$. Since $\mathfrak{k}$ intersects $\tau$, it intersects $\cup_{g\in G}\,g(T)$, and hence, by translating $\mathfrak{k}$ by an element of $\Gamma$, we may assume, without loss of generality that, $\mathfrak{k}$ intersects $T$. Therefore, if we can verify that any geodesic intersecting $T$ intersects $\Gamma (B(y,\rho))$ our claim follows.

Since $\mathfrak{k}$ intersects $T$, we have that $\mathfrak{k}$ intersects $\mathfrak{t}_{i,0}([0,\mu_y])$ for some $i\in\{1,\cdots,2n\}$.
We may assume that $\mathfrak{k}$ intersect $\mathfrak{t}_{i,0}([0,\mu_y-\rho])$, otherwise the claim follows trivially.
Let $\mathfrak{w}$ be the geodesic intersecting $\mathfrak{k}\cap\mathfrak{t}_{i,0}([0,\mu_y-\rho])$ and $\mathfrak{q}_i^-(a^-)$. We parametrise $\mathfrak{k}$ such that $\mathfrak{k}(0)=\mathfrak{k}\cap\mathfrak{t}_{i,0}([0,\mu_y-\rho])$ and $\mathfrak{k}((0,\infty))$ is contained in the connected component $R$ of $\mathbb{D}\setminus \mathfrak{w}$ containing $y$. Let $R_1 \subset R$ be the region contained between $\mathfrak{w}$ and the geodesic passing through $\mathfrak{k}(0)$ and $\mathfrak{q}_i^+(a^+)$, as well as not containing any non-trivial segment of $t_{i,0}$. Moreover, let $R_2 = R\setminus R_1$. See \Cref{f:balltraj} for an illustration.

\begin{figure}[th!]
\includegraphics[scale=0.35]{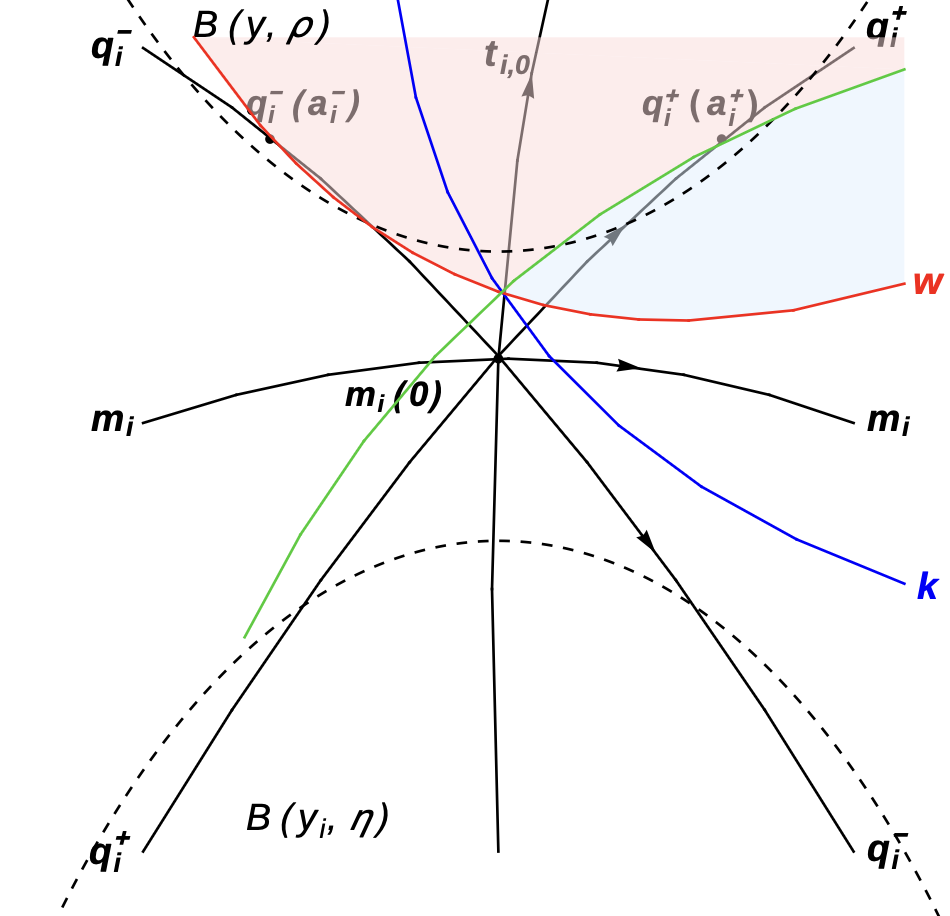} 
\caption{\textls[-10]{Examples of geodesic segments $\mathfrak{q}_i^+([0,\infty))$ and $\mathfrak{q}_i^-((-\infty,0])$ intersecting $B(y,\rho)$ at the points $\mathfrak{q}_i^+(a^+)$ and $\mathfrak{q}_i^-(a^-)$, and the geodesic segments $\mathfrak{q}_i^-([0,\infty))$ and $\mathfrak{q}_i^+((-\infty,0])$ intersecting $B(y_i,\eta)$, together with the geodesic $\mathfrak{k}$, in {\color{blue}blue}, which intersects the fundamental domain $\tau$.  Also included in {\color{Red}red}, is a segment of the geodesic $\mathfrak{w}$ passing through the points $\mathfrak{q}_i^-(a^-)$ and $\mathfrak{k}(0)$ and, in {\color{Green}green}, the geodesic passing through $\mathfrak{q}_i^+(a^+)$ and $\mathfrak{k}(0)$.  We also highlight the regions $R_1$ and $R_2$ shaded in {\color{blue}blue} and {\color{Red}red} respectively.}}
\label{f:balltraj}
\end{figure}

To complete the proof of our claim we consider the following two cases: when $\mathfrak{k}((0,\infty)) \subset R_2$, and when $\mathfrak{k}((0,\infty)) \subset R_1$. If $\mathfrak{k}((0,\infty))$ is contained in $R_2$, then $\mathfrak{k}((0,\infty))$ intersects the ball $B(y,\rho)$ by construction. On the other hand, if $\mathfrak{k}((0,\infty))$ is contained in $R_1$, then by construction $\mathfrak{k}((0,\infty))$ intersects $B(\gamma_i(y),\rho)$. This is due to the segment being below $\mathfrak{q}_i^+$ and above $\mathfrak{q}_i^-$, both of which intersect $B(\gamma_i(y),\rho)$.

Therefore, since $\mathfrak{k}$ was chosen arbitrary, all geodesics intersecting $T$, also intersect $\Gamma(B(y,\rho))$. Thus, all geodesics intersect $\Gamma(B(y,\rho))$, and hence, the open ball $B(y,\rho)$ satisfies the conditions in \Cref{Thrm-Delone if and only if one sided tangency} and so $S^+(\ell,\rho,y)$ is chaotic Delone.
\end{proof}

We conclude this section by proving the following result of the set of lengths of the $S^+(\ell,\rho,x)$. For this we introduce the following notation. For a point set $\Lambda\subset \mathbb{R}$ we define the \textit{set of lengths of $\Lambda$} as
    \begin{align*}
    \mathcal{L}_{\Lambda} = \{ x-y : x,y\in\Lambda, \, x > y \; \text{and} \; (y,x) \cap \Lambda = \emptyset
    \}.
    \end{align*}

\begin{Thm}\label{Prop:lengths_of_gaps}
If $x \in \tau^{\circ}$ and $\rho \in \mathbb{R}$ positive satisfy the conditions of \Cref{Thrm-Delone if and only if one sided tangency}, then $\mathcal{L}_{S^+(\ell,\rho,x)}$ is countably infinite.
\end{Thm}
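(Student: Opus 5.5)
Write $S=S^+(\ell,\rho,x)$. The plan is to prove countability and infiniteness separately.

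\emph{Countability.} Under the hypotheses of \Cref{Thrm-Delone if and only if one sided tangency}, $S$ is $\delta$-separated by \Cref{lem: ap}\,(1). Hence $S$ is a discrete, countable subset of $\mathbb{R}$. Each element of $\mathcal{L}_{S}$ is determined by its left endpoint $y\in S$, since the right endpoint is the successor of $y$ in $S$. So $\mathcal{L}_{S}$ is the image of a map from the countable set $S$, and is therefore at most countable.

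\emph{Infiniteness.} I argue by contradiction. Suppose $\mathcal{L}_{S}$ is finite. Then $S$ has finite local complexity, and every gap length $z-y$ is the distance between the projections $p_\ell(\gamma_1 x)$ and $p_\ell(\gamma_2 x)$ of two consecutive orbit points in the strip. This distance is a continuous function of the configuration (the geodesic together with the two orbit points).

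The key step is to use density of $(x_0,v_0)$ under the geodesic flow, via \Cref{lem:Chaotic Behaviour of geodesic}\,(2). For any geodesic $k$, some translate of $S$ approximates $S^+(k,\rho,x)$ on arbitrarily large windows in the sense of $N_r$. Consequently every gap length occurring in any $S^+(k,\rho,x)$ is a limit of elements of $\mathcal{L}_{S}$; since $\mathcal{L}_{S}$ is assumed finite, it actually lies in $\mathcal{L}_{S}$. This needs care at the boundary of the strip: a gap of $S^+(k,\rho,x)$ is only guaranteed to be a limit of gaps of $S$ when no orbit point lies on $\partial\mathcal{N}(k,\rho)$. Generic geodesics satisfy this, and this genericity condition is open.

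Now I vary $k$ continuously within an open set of geodesics $k_t$ such that the pair of consecutive orbit points $\gamma_1 x,\gamma_2 x$ in the strip stays fixed and no new orbit point enters the strip between them. The gap length $d_t$ is then a real-analytic function of $t$. It is nonconstant: rotating $k$ about a point between the two projections changes the projected distance, because hyperbolic projection distances $\sinh$-type formulas vary strictly with the angle. So the gap lengths take a continuum of values, a subinterval. Each value in this interval is then attained (or approximated, and hence by finiteness attained) in $\mathcal{L}_{S}$. A continuum of values forces $\mathcal{L}_{S}$ to be uncountable, and in particular infinite. This contradicts finiteness, and combined with the countability established above shows $\mathcal{L}_{S}$ is infinite. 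Thus $\mathcal{L}_{S}$ is countably infinite.

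The main obstacle is making the approximation step rigorous. The $N_r$ closeness only gives that gaps of $S$ are within $1/r$ of gaps of $S^+(k,\rho,x)$, and it needs the matched points to be consecutive in both sets. I would handle this using $\delta$-separation: for $1/r<\delta/2$ the matching is a bijection that preserves order on the window, so consecutive points correspond. I would also fix the configuration so that all orbit points near the window lie at distance bounded away from $\partial\mathcal{N}(k,\rho)$, which makes the point sets locally stable. Then infinitely many distinct limit values, each within $1/r$ of an element of the finite set $\mathcal{L}_{S}$, give the contradiction.
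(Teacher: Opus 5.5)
Your proposal is correct, and it proves infiniteness by a genuinely different mechanism from the paper.

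\textbf{The paper's route.} It only uses geodesics $k$ that project to closed geodesics. Assuming $\mathcal{L}_{S}=\{L_1,\dots,L_N\}$, it transfers gaps via \Cref{lem:Chaotic Behaviour of geodesic}\,(2) to obtain $\mathcal{L}_{S^+(k,\rho,x)}\subseteq\mathcal{L}_{S}$. It then observes that the period of the periodic set $S^+(k,\rho,x)$, namely the length of the closed geodesic, is a sum of gaps and so lies in $L_1\mathbb{N}\oplus\cdots\oplus L_N\mathbb{N}$. This contradicts the Burger--Iozzi--Parreau--Pozzetti result that the length spectrum contains infinitely many $\mathbb{Q}$-independent lengths.

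\textbf{Your route.} You share the gap-transfer step, and you justify the consecutive-point matching via $\delta$-separation, which the paper leaves implicit. However, you apply it to arbitrary locally generic $k$. You then replace the arithmetic of the length spectrum by the observation that a single gap of $S^+(k,\rho,x)$ sweeps out a nondegenerate interval as $k$ is perturbed.

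\textbf{What each buys.} Your argument is self-contained: it needs neither periodic orbits nor the external theorem. It uses only $\rho<\operatorname{inj}(\Gamma,x)$ and the density of the orbit of $(x_0,v_0)$, not the covering hypothesis. It also gives a stronger conclusion with no finiteness assumption: the closure of $\mathcal{L}_S$ contains an interval. The paper's route is shorter given the cited theorem, and it ties tile lengths to the length spectrum.

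\textbf{Two small points to tidy.} First, the condition that no orbit point lies on $\partial\mathcal{N}(k,\rho)$ is open only over a bounded window of $k$; for the whole geodesic it is merely a dense $G_\delta$. The windowed version is all you actually use. Second, the gap need not vary strictly with the rotation angle, but nonconstancy is immediate. Let $k_\theta$ be $k$ rotated by $\theta$ about $m$, and let $z_j$ lie at distance $r_j$ from $m$ at angle $\phi_j$. The signed projection positions then satisfy $\tanh s_j(\theta)=\tanh r_j\cos(\phi_j-\theta)$. Hence $s_2-s_1$ is real-analytic on $\mathbb{R}$ with $(s_2-s_1)(\theta+\pi)=-(s_2-s_1)(\theta)$. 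Since it is positive at $\theta=0$, it is nonconstant, and therefore nonconstant on every interval around $0$.
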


\begin{proof}
    As $\Gamma$ is discrete, $S^+(\ell,\rho,x)$ is a countable, and hence $\mathcal{L}_{S^+(\ell,\rho,x)}$ is countable. Therefore, it is sufficient to show that $\mathcal{L}_{S^+(\ell,\rho,x)}$ is not finite.

   Assume for a contradiction that $\mathcal{L}_{S^+(\ell,\rho,x)} = \{L_1,\cdots, L_N\} \subset \mathbb{R}$, for some $N\in\mathbb{N}$.
   Let $k$ be a geodesic in $\mathbb{D}$ which lifts to a closed geodesic on $\Sigma_{\Gamma}$. By \Cref{lem:Chaotic Behaviour of geodesic} (2), we have that $S^+(k,\rho,x)$ is $(\epsilon,\delta)$-Delone.  Moreover, we claim that 
    \begin{align*}
    \mathcal{L}_{S^+(k,\rho,x)}\subseteq \mathcal{L}_{S^+(\ell,\rho,x)}.
    \end{align*}    
   To see the claim, observe that for each pair of consecutive points $x, y \in S^+(k,\rho,x)$ and each $s>0$, there is a pair of consecutive points $x', y' \in S^+(\ell,\rho,x)$ with $\lvert \lvert x - y \rvert - \lvert x' - y' \rvert \rvert \leq 1/s$. Since the set of distances $\mathcal{L}_{S^+(\ell,\rho,x)}$ is assumed to be finite and $S^+(k,\rho,x)$ is periodic it follows that $\mathcal{L}_{S^+(k,\rho,x)}\subseteq \mathcal{L}_{S^+(\ell,\rho,x)}$.

    Thus, for all geodesics $k$ which lift to a closed geodesic on $\Sigma_{\Gamma}$, we have that $\mathcal{L}_{S^+(k,\rho,x)}\subseteq \mathcal{L}_{S^+(\ell,\rho,x)}$.
    Hence, the length of a closed geodesic on the surface $\Sigma_{\Gamma}$ is contained in the set 
    \begin{align*}
    L_1\mathbb{N}\oplus\cdots\oplus L_N\mathbb{N}.
    \end{align*}   
    This however contradicts {\cite[Proposition 1.8]{realspectrumcompactificationcharacter}}, which implies that the length spectrum of a non-elementary Fuchsian group contains an infinite set of lengths that is independent over $\mathbb{Q}$.
\end{proof}

\section{Applications to cocompact triangle groups}

We now look to apply our work to cocompact Fuchsian triangle groups. For a cocompact Fuchsian triangle group $\Gamma$, regardless of our choice of fundamental domain, there exists a point $x\in\tau$ such that the closed ball $\overline{B}(x,\mu_x)$ is tangent to all sides of $\tau$. Indeed, as discussed in \Cref{sec:Chaotic_Delone}, this is a natural point to consider.
Moreover, since $\Gamma$ satisfies the setting in \Cref{sec:Chaotic_Delone}, we can apply \Cref{Nice condition for aperiodic delone sets} and find chaotic Delone sets by studying the behaviour of the extended sides of the chosen fundamental domain $\mathcal{F}$ of $\Gamma$. We note that this desired behaviour is controlled by the signature of $\Gamma$. This leads to our main result (\Cref{T:MainResult}), the proof of which requires the following lemmas that build on \cite[Theorem 1]{schmidt2024continuous}.

\begin{Lem}\label{l:two_odd_signatures_QuadDomain}
    Let $\Gamma$ be a cocompact triangle group with signature $(m_1, m_2, m_3)$ and $\mathcal{F}$ be a quadrilateral fundamental domain of $\Gamma$. If there are at least two odd numbers in the signature, then the extensions of all sides of $\mathcal{F}$ intersect $\Gamma(\mathcal{F}^\circ)$.
\end{Lem}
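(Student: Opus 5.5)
We need to show that each extended side $\mathfrak{r}_i$ of the quadrilateral $\mathcal{F}$ enters the interior of some translate $\gamma(\mathcal{F})$. The plan is local and angle-based, carried out at the vertices of $\mathcal{F}$. The tessellation $\Gamma(\mathcal{F})$ around a vertex $v$ consists of copies of $\mathcal{F}$ whose angles at $v$ add to $2\pi$. An extended side $\mathfrak{m}$ through $v$ continues straight through $v$, that is, at angle $\pi$ from the side $m$. Such a geodesic lies entirely in the $1$-skeleton $\Gamma(\partial\mathcal{F})$ near $v$ only if the angular sum of consecutive tiles on one side of $m$ hits exactly $\pi$.

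Step one: compute the local pictures.
\begin{itemize}
\item Around $v_4$ (angle $2\pi/m_1$, cycle $\{v_4\}$) there are exactly $m_1$ copies of $\mathcal{F}$, each with angle $2\pi/m_1$. They are the images under powers of the rotation fixing $v_4$.
\item The same holds around $v_2$ with $m_2$ copies of angle $2\pi/m_2$.
\item Around $v_1$ (cycle $\{v_1,v_3\}$, angles $\pi/m_3$ each) there are $2m_3$ copies, each with angle $\pi/m_3$.
\end{itemize}
In every case the edges emanating from $v$ are equally spaced, with spacing $2\pi/m_1$, $2\pi/m_2$ and $\pi/m_3$ respectively. So the straight continuation of a side through $v_4$ is an edge of the tessellation iff $\pi$ is a multiple of $2\pi/m_1$, i.e.\ iff $m_1$ is even. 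Likewise at $v_2$ the condition is $m_2$ even. At $v_1$ or $v_3$, $\pi = m_3\cdot(\pi/m_3)$ always, so extensions through $v_1,v_3$ always remain on edges.

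Step two: each side of $\mathcal{F}$ has one endpoint in $\{v_2,v_4\}$ and the other in $\{v_1,v_3\}$, since the vertices alternate $v_1,v_2,v_3,v_4$. Hence:
\begin{itemize}
\item $\mathfrak{r}_1$ and $\mathfrak{r}_2$ pass through $v_2$;
\item $\mathfrak{r}_3$ and $\mathfrak{r}_4$ pass through $v_4$.
\end{itemize}
If $m_1$ is odd, then continuing $r_3$ (or $r_4$) through $v_4$ bisects the angle of some tile at $v_4$. The extension therefore enters the interior of that translate $\gamma(\mathcal{F})$, which gives $\mathfrak{r}_3,\mathfrak{r}_4\cap\Gamma(\mathcal{F}^\circ)\neq\emptyset$. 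The same argument handles $\mathfrak{r}_1,\mathfrak{r}_2$ when $m_2$ is odd. Thus if $m_1,m_2$ are both odd we are done.

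Step three: handle the case where $m_3$ is one of the two odd entries, say $m_1$ and $m_3$ are odd but $m_2$ is even. This is the main obstacle. Here $\mathfrak{r}_1,\mathfrak{r}_2$ go straight through $v_2$ along an edge, and then continue straight through the far vertex of the next tile. That vertex is an image of $v_1$ or $v_3$, and $\mathfrak{r}$ also stays on edges there, so the extension is a geodesic walking along edges. I would follow the extended side through successive vertices using the cyclic labelling: at each step the geodesic continues along edges, and the next vertex reached alternates between the $v_2$-type orbit and the $\{v_1,v_3\}$-type orbit, or reaches a $v_4$-type vertex. The claim to verify is that the edge path through a $v_2$-type vertex, after passing a $v_1/v_3$-type vertex, leads to an edge ending at a $v_4$-type vertex. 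There $m_1$ odd forces the continuation into a tile interior. To check this, I would use the side-pairing data $T_{i-1}(v_i)=v_{\rho(i)}$ to track which vertex type lies at the opposite end of the continued edge. Around $v_1$ with $2m_3$ tiles, the edge opposite to a given edge is $m_3$ steps away. Since tiles alternate orientation there, with $v_1$ and $v_3$ corners alternating, an odd $m_3$ flips the type, so the opposite edge is of the other side type and its far endpoint is a $v_4$-image rather than a $v_2$-image. The symmetric cases ($m_2,m_3$ odd, or all three odd) follow by the same parity bookkeeping, with the roles of $v_2$ and $v_4$ interchanged. Combining this with step two, every extended side meets $\Gamma(\mathcal{F}^\circ)$, following the analysis of \cite[Theorem 1]{schmidt2024continuous}.
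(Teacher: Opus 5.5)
Your proposal is correct and follows essentially the same route as the paper. You use the same case split on which entries of the signature are odd. You use the same observation that an odd-order vertex $v_2$ or $v_4$ sends the straight continuation of a side into a tile interior. When $m_3$ is odd, you use the same edge path from a $v_2$-type vertex through a $\{v_1,v_3\}$-type vertex to a $v_4$-type vertex.

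The differences are minor:
\begin{itemize}
\item You identify the continuing edge through equal angular spacing and the alternation of edge types around $v_1$ (each corner there has one side ending at a $v_2$-image and one at a $v_4$-image). The paper instead uses explicit rotations such as $T_4^{(m_1+1)/2}$ and $(T_2T_4)^{(m_3-1)/2}$.
\item You rightly note that one ray of each extended side suffices. The paper also checks the extensions at $v_1$ and $v_3$ when $m_1,m_2$ are odd, which the statement does not require.
\end{itemize}
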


\begin{proof}
Let $\mathcal{F}$ be the quadrilateral fundamental domain of $\Gamma$ described in \Cref{sec:QFD}. Suppose that $m_1$ and $m_2$ are odd, and that $m_3$ is even. Since $m_1$ is odd, applying $T_4^{(m_1+1)/2}$ to the geodesic $\mathfrak{r}_4$ containing the side $r_4$, we see that the extension of $r_4$ at the vertex $v_4$ intersects $\Gamma(\mathcal{F}^\circ)$. Since $r_4$ and $r_3$ are paired, the extension of $r_3$ at $v_4$ also meets $\Gamma(\mathcal{F}^\circ)$. Since $m_2$ is also odd, a similar argument can be applied to the extensions of the sides $r_1$ and $r_2$ at the vertex $v_2$. Hence, it suffices to show that the side extensions at $v_1$ and $v_3$ intersect $\Gamma(v_4)$ or $\Gamma(v_2)$. By our arguments directly following \eqref{eq:QuadDomainPair}, we have that $T_2T_4$ fixes $v_1$ while rotating the side $r_4$ by $2\pi/{m_3}$ clockwise. Thus, $(T_2T_4)^{m_3/2}(r_4)$ will be on the extension of $r_4$ at the vertex $v_1$, and hence, the extension of $r_4$ at the vertex $v_1$ intersects $\Gamma(v_4)$. Similarly, $T_3T_1$ fixes $v_1$ and rotates the side $r_1$ by $2\pi/{m_3}$ counterclockwise. Therefore, $(T_3T_1)^{m_3/2}(r_1)$ will be on the extension of $r_1$ at the vertex $v_1$ and so, the extension of $r_1$ at the vertex $v_1$ intersects $\Gamma(v_2)$. By symmetry, the claim holds for the remaining extensions of the sides $r_2$ and $r_3$ at the vertex $v_3$.  

Now, without loss of generality, suppose that $m_1$ and $m_3$ are odd and that $m_2$ is even. From the argument above, the extensions of $r_4$ and $r_3$ at $v_4$ meets $\Gamma(\mathcal{F}^\circ)$. Thus, it suffices to show that all the other extensions meet $\Gamma(v_4)$. Applying $(T_2)^{m_2/2}$ to $\mathcal{F}$, we rotate the fundamental domain by $\pi$ counterclockwise at $v_2$, obtaining $\mathcal{F}'$, and hence, the initial part of the extension of $r_2$ at $v_2$ is the image of $r_2$. Let $v_3'=(T_2)^{m_2/2}(v_3)$ and $r_3'=(T_2)^{m_2/2}(r_3)$. Since $T_1T_3$ and $T_4T_2$ are rotations at $v_3$ by $2\pi/{m_3}$, there is also a rotation $\gamma\in\Gamma$ at $v_3'$ by $2\pi/{m_3}$ clockwise. Thus, applying $\gamma^{(m_3-1)/2}$ to $\mathcal{F}'$, we rotate $\mathcal{F}'$ by $\pi-\pi/{m_3}$ at $v_3'$, and find that the initial part of the extension of $r_2$ at $v_3'$ is the image of $r_3'$. Hence, the extension of $r_2$ at $v_2$ meets $\Gamma(v_4)$. By the side pairing relationships, the same is true for the extension of $r_1$ at $v_2$. As a result, it suffices to show that the extensions at $v_1$ and $v_3$ intersect $\Gamma(v_2)$ or $\Gamma(v_4)$. To this end, consider the extension of $r_4$ at $v_1$. Applying $(T_2T_4)^{(m_3-1)/2}$ to $\mathcal{F}$, we rotate the fundamental domain by $\pi-\pi/{m_3}$ clockwise at $v_1$, and find that the initial part of the extension of $r_4$ at $v_1$ is the image of $r_1$. Thus, the extension of $r_4$ at $v_1$ meets the $\Gamma(v_2)$. Similarly, applying $(T_3T_1)^{(m_3-1)/2}$ to $\mathcal{F}$, we rotate the fundamental domain by $\pi-\pi/{m_3}$ counterclockwise at $v_1$, and find that the initial part of the extension of $r_1$ at $v_1$ is the image of $r_4$. Therefore, the extension of $r_1$ at $v_1$ intersects $\Gamma(v_4)$. Again, by symmetry, a similar result holds for the extensions of the sides $r_2$ and $r_3$ at the vertex $v_3$.

Finally, suppose $m_1$, $m_2$ and $m_3$ are all odd.  
From the first case above, it follows that the extensions of the sides at $v_2$ and $v_4$ intersect $\Gamma(\mathcal{F}^\circ)$. For the remaining extensions, a similar argument to that in the second case applies, yielding the desired result.
\end{proof}
  
\begin{Lem}\label{l:hexDomain}
    Let $\Gamma$ be a cocompact triangle group with signature $(m_1, m_2, m_3)$ and $\mathcal{F}$ be a hexagonal fundamental domain of $\Gamma$. The extensions of all the sides of $\mathcal{F}$ intersect $\Gamma(\mathcal{F}^\circ)$.
\end{Lem}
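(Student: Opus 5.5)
Since every extended side $\mathfrak{m}$ is a complete geodesic leaving $\mathcal{F}$ through its endpoint vertices, the plan is to follow each extension beyond a vertex and show that it enters the interior of some image $\gamma(\mathcal{F})$. Equivalently, it should not run along the boundary of the tessellation $\Gamma(\mathcal{F})$. As in the proof of \Cref{l:two_odd_signatures_QuadDomain}, it suffices to show that, at each vertex $w$ of $\mathcal{F}$, the continuation of each side through $w$ either enters some $\gamma(\mathcal{F}^\circ)$ immediately, or coincides locally with a side of some $\gamma(\mathcal{F})$ whose far endpoint is a vertex at which the next continuation enters an open tile.

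\emph{Accidental vertices $a_j$.} The first step treats these. The tiles meeting at $a_1$ are $\mathcal{F}$ and images of $\mathcal{F}$ whose angles at the images of $a_1,a_2,a_3$ are $\theta_1,\theta_2,\theta_3$, one each, since the cycle has angle sum $2\pi$. So exactly three tiles meet at each point of $\Gamma(a_1)$. Consider a side ending at $a_j$. Its continuation makes angle $\pi$ with it, measured on either side, and passes into the interior of a tile unless some partial sum of consecutive angles equals $\pi$. Since only three angles $\theta_1,\theta_2,\theta_3$ occur, with $\theta_1+\theta_2+\theta_3=2\pi$, that would force some $\theta_i=\pi$ or $\theta_i+\theta_k=\pi$. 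The latter means the remaining angle $\theta_l$ equals $\pi$. Either way some $\theta_i=\pi$, contradicting strict convexity of the hexagon. Hence the continuation at $a_j$ enters an open tile, so every side has at least one good endpoint. Each side $s_i$ and $t_i$ has one endpoint at some $a_j$ and the other at $v_i$, because the hexagon alternates $v_1,a,v_2,a,v_3,a$.

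\emph{Elliptic vertices $v_i$.} This is only needed for completeness of the statement as used by \Cref{Nice condition for aperiodic delone sets}, where the extended side is the whole geodesic. Since the extended side is a single geodesic, it intersects $\Gamma(\mathcal{F}^\circ)$ as soon as one end does. The $a_j$-end already does, so in fact the elliptic vertices need no analysis. Nonetheless, I would note the check there: at $v_i$ the tiles are the $m_i$ rotates by $U_i$, each with angle $2\pi/m_i$. The extension lies along a tile edge if and only if $m_i$ is even, in which case $U_i^{m_i/2}$ maps $t_i$ onto the extension and we continue to an image of $a_j$.

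\emph{Main obstacle.} The key step is justifying that exactly three tiles, with angles $\theta_1,\theta_2,\theta_3$ in cyclic order, meet at an accidental vertex. This follows from the cycle structure $U_1(a_1)=a_3$, $U_2(a_2)=a_1$, $U_3(a_2)=a_3$, together with the angle sum $2\pi$ and \cite[Theorem 9.3.5]{beardon2012geometry}. The remaining input is that convexity forces $\theta_i<\pi$.
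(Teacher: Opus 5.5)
Your proof is correct, and it takes a shorter route than the paper's. The accidental-vertex step matches the paper's first paragraph: three tiles meet at each point of $\Gamma(a_1)$, with angles $\theta_1,\theta_2,\theta_3$ summing to $2\pi$ and each $\theta_i<\pi$, so the continuation of a side through $a_j$ cannot run along a tile edge. Your partial-sum argument spells out what the paper only asserts.

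You diverge at the elliptic vertices. The paper treats each $v_i$ separately. When $m_i$ is even, $U_i^{m_i/2}$ carries the continuation along an image of $t_i$ or $s_i$ to an image of an accidental vertex, where the first step applies. When $m_i$ is odd, $U_i^{(m_i+1)/2}$ shows the continuation enters an open tile at once. You instead use that every side joins some $v_i$ to some $a_j$ and that an extended side is a single geodesic, so the accidental end alone proves the lemma as stated. This removes the parity case analysis, and it is exactly the geodesic-level property used in \Cref{Nice condition for aperiodic delone sets}.

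The paper's longer route buys a stronger, ray-by-ray conclusion: the continuation of each side beyond each of its endpoints meets $\Gamma(\mathcal{F}^\circ)$. This matches the vertex-by-vertex format of \Cref{l:two_odd_signatures_QuadDomain}, and your remark on the elliptic vertices recovers it anyway.

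The one fact you assert without proof, that each side has exactly one elliptic endpoint, is easy to supply. If the far endpoint of $t_i$ were $v_k$ with $k\neq i$, then $U_i$ would send it to an equivalent vertex of $\mathcal{F}$. Since $\{v_k\}$ is a cycle, $U_i$ would fix both $v_i$ and $v_k$, which is impossible for a non-identity elliptic element.
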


\begin{proof}
Independent of the signature of $\Gamma$, by the side pairing relationships, we have that \mbox{$U_1(a_1)=a_3$,} $U_2(a_2)=a_1$, $U_3(a_2)=a_3$, resulting in $a_3\in\mathcal{F}\cap U_1(\mathcal{F})\cap U_3
(\mathcal{F})$. Since we also have $\sum_i\theta_i=2\pi$ with $\theta_i<\pi$ for each $i$, where $\theta_i$ denotes the internal angle at the vertices $a_i$, the extensions of the sides $t_3$ and $s_1$ at $a_3$ intersect the $\Gamma(\mathcal{F}^\circ)$. A similar argument can be applied for the extensions of the sides at the vertices $a_1$ and $a_2$. 

Consider the vertex $v_i$ where $m_i$ is even. Applying $U_i^{m_i/2}$ to $\mathcal{F}$, we obtain a rotation of $\mathcal{F}$ at $v_i$ by $\pi$ and find that the extensions of $t_i$ and $s_{i}$ at $v_{i}$ has an image of $a_{i}$ and $a_{i-1}$ respectively. Since the extensions at the vertices $a_i$ intersect $\Gamma(\mathcal{F}^\circ)$, it follows that the extensions of the sides at $v_i$ intersect $\Gamma(\mathcal{F}^\circ)$.

Now, suppose $m_i$ is odd. In this case, $U_i^{(m_i+1)/2}$ rotates $t_i$ and $s_i$ by $\pi+\pi/{m_i}$ at $v_i$. Thus, the extensions of $t_i$ and $s_i$ at $v_i$ intersects $\Gamma(\mathcal{F}^\circ)$. 
\end{proof}

\begin{proof}[Proof of \Cref{T:MainResult} (1)]
    We observe that for a triangle group $\Gamma$ with at least two odd numbers in the signature, \Cref{l:two_odd_signatures_QuadDomain} verifies that the extended sides of the polygon $\mathcal{F}$ intersect $\Gamma(\mathcal{F}^\circ)$. Hence applying \Cref{Nice condition for aperiodic delone sets}, we can see that $S^+(\ell,\rho,x)$ is chaotic Delone.

    For the forward direction we consider the case when there is at most one odd number in the signature. Letting $\mathcal{F}$ be the quadrilateral fundamental domain constructed in \Cref{cocompactTriGp}. It is shown \mbox{\cite[Theorem 1]{schmidt2024continuous}} that when all $m_i$ are even or when only $m_3$ is odd, then the boundary of $\mathcal{F}$ lifts to closed geodesics on $\Sigma_{\Gamma}$, which can not intersect $B(\pi(x),\rho)$ and hence contradicting \Cref{Thrm-Delone if and only if one sided tangency}.
    
    The remaining case is when either $m_1$ or $m_2$ is odd. Without loss of generality, suppose that $m_1=2k+1$ and that $m_2$ and $m_3$ are even. Consider the side $r_1$ of $\mathcal{F}$ emanating from the vertex $v_2$. We follow the unit tangent vectors along this side until $v_1$ and then we apply $(T_2T_4)^{m_3/2}$, which rotates it by $(m_3/2)(2\pi/m_3)=\pi$. Following, this path back along $r_{1}$ until $v_2$, we obtain a unit tangent vector which has the opposite direction from the departing unit tangent vector. We now apply $(T_2)^{m_2/2}$ so as to turn the unit tangent vector to its initial direction. Lifting this path to $\Sigma_{\Gamma}$ yields a closed geodesic contained in the lift of the boundary of $\mathcal{F}$, which can not intersect $B(\pi(x),\rho)$. Hence contradicting \Cref{Thrm-Delone if and only if one sided tangency}. 
\end{proof}

\begin{proof}[Proof of \Cref{T:MainResult}\,(2) and (3)]
    \Cref{T:MainResult}\,(2) is a direct application of \Cref{l:hexDomain} in tandem with \Cref{Nice condition for aperiodic delone sets}, and \Cref{T:MainResult}\,(3) is an application of \Cref{Prop:lengths_of_gaps}.
\end{proof}

\section*{Acknowledgements}

The authors thank Gregory J. Chaplain (Department of Physics and Astronomy, University of Exeter) and Andrew Mitchell (Department of Mathematical Sciences, Loughborough University) for valuable discussions. Richard A. Howat thanks EPSRC Doctoral Training Partnership, the University of Birmingham and the University of Exeter for financial support during the development and writing of this article. Ay\c{s}e Y{\i}ltekin-Karata\c{s} is extremely grateful to LMS Scheme 5 grants 52511 and 52406 and the University of Exeter for supporting several research visits to collaborate on this work.

\bibliographystyle{abbrv}
\bibliography{Ref}

\begin{thebibliography}{10}

\bibitem{ChaoticDeloneSet}
J.~A. \'Alvarez~L\'opez, R.~Barral~Lij\'o, J.~Hunton, H.~Nozawa, and J.~R.
  Parker.
\newblock Chaotic {D}elone sets.
\newblock {\em Discrete Contin. Dyn. Syst.}, 41(8):3781--3796, 2021.

\bibitem{Aperiodic_Order_Vol1}
M.~Baake and U.~Grimm.
\newblock {\em Aperiodic order. {V}ol. 1}, volume 149 of {\em Encyclopedia of
  Mathematics and its Applications}.
\newblock Cambridge University Press, Cambridge, 2013.

\bibitem{BAAKE_LENZ_2004}
M.~Baake and D.~Lenz.
\newblock Dynamical systems on translation bounded measures: pure point
  dynamical and diffraction spectra.
\newblock {\em Ergodic Theory Dynam. Systems}, 24(6):1867–1893, 2004.

\bibitem{beardon2012geometry}
A.~F. Beardon.
\newblock {\em The geometry of discrete groups}, volume~91.
\newblock Springer, 2012.

\bibitem{LocalRubberTop}
J.~Bellissard, D.~J.~L. Herrmann, and M.~Zarrouati.
\newblock Hulls of aperiodic solids and gap labeling theorems.
\newblock In {\em Directions in mathematical quasicrystals}, volume~13 of {\em
  CRM Monogr. Ser.}, pages 207--258. American Mathematical Society, Providence,
  RI, 2000.

\bibitem{realspectrumcompactificationcharacter}
M.~Burger, A.~Iozzi, A.~Parreau, and M.~B. Pozzetti.
\newblock The real spectrum compactification of character varieties:
  characterizations and applications, 2021.

\bibitem{PhysRevLett.131.177001}
B.~Davies, G.~J. Chaplain, T.~A. Starkey, and R.~V. Craster.
\newblock Graded quasiperiodic metamaterials perform fractal rainbow trapping.
\newblock {\em Phys. Rev. Lett.}, 131:177001, 2023.

\bibitem{EinsiedlerWard2011}
M.~Einsiedler and T.~Ward.
\newblock {\em Ergodic theory with a view towards number theory}, volume 259 of
  {\em Graduate Texts in Mathematics}.
\newblock Springer, 2011.

\bibitem{GL_1989}
C.~Godrèche and J.~M. Luck.
\newblock Quasiperiodicity and randomness in tilings of the plane.
\newblock {\em J. Stat. Phys.}, 55:1--28, 1989.

\bibitem{KatokHasselblatt1995}
A.~Katok and B.~Hasselblatt.
\newblock {\em Introduction to the modern theory of dynamical systems},
  volume~54 of {\em Encyclopedia of Mathematics and Its Applications}.
\newblock Cambridge University Press, Cambridge, 1995.

\bibitem{Katok1992FuchsianGroups}
S.~Katok.
\newblock {\em Fuchsian groups}.
\newblock Chicago Lectures in Mathematics. University of Chicago Press, 1992.

\bibitem{ErgodicThryDGroups}
P.~J. Nicholls.
\newblock {\em The ergodic theory of discrete groups}, volume 143 of {\em
  London Mathematical Society Lecture Note Series}.
\newblock Cambridge University Press, Cambridge, 1989.

\bibitem{schmidt2024continuous}
T.~A. Schmidt and A.~Y{\i}ltekin-Karata{\c{s}}.
\newblock Continuous deformation of the {B}owen-{S}eries map associated to a
  cocompact triangle group.
\newblock {\em Geom. Dedicata}, 218(3):60, 2024.

\end{thebibliography}

\appendix
\section{}

Throughout the appendix we use the notation and setting of \Cref{sec:Chaotic_Delone}. 
We remark, that the statements of \cite[Lemmas 4.4, 4.8, 4.9 and 4.10]{ChaoticDeloneSet} are given for point sets on the geodesic in $\mathbb{D}$, whereas the statements of our results transforms the point set via an isometry to $\mathbb{R}$. For ease of exposition, we also abuse notation and use the same symbol for a geodesic and its parametrisation.

\begin{proof}[{Proof of \Cref{lem:Chaotic Behaviour of geodesic}}\,(1)]
    Let $r \in \mathbb{R}$ positive and let $\overline{\mathcal{N}}^+(\ell([-r,r]),\rho)$ be as defined in \eqref{eq:N+} with the geodesic replaced by the geodesic segment $\ell([-r,r])$. By \Cref{Lem:trans anosov flow} there exists a geodesic $k$ in $\mathbb{D}$ which lifts to a closed geodesic on $\Sigma_{\Gamma}$ such that
    \begin{enumerate}[label=(\roman*)]
        \item\label{cond(i)} the hyperbolic segment $k([-r,r])$ is contained in the positive component of $\mathcal{N}(\ell,\rho)\setminus \ell$ with respect to the normal bundle of $\ell$, and $\ell([-r,r])$ is contained in the positive component of \mbox{$\mathcal{N}(k,\rho)\setminus k$} with respect to the normal bundle of $k$;
    \end{enumerate}
    which, in tandem with the fact that $Z=\overline{\mathcal{N}}^+(\ell([-r,r]),\rho) \cap \Gamma(x)$ is finite, allows us to choose $k$ where
    \begin{enumerate}[label=(\roman*)]
        \setcounter{enumi}{1}
        \item\label{cond(ii)} the set $Z$ is contained in $\overline{\mathcal{N}}^+(k([-r,r]),\rho)\cap \overline{\mathcal{N}}^+(\ell([-r,r]),\rho)$;
        \item\label{cond(iii)} for all $y\in \ell([-r,r])$, we have \mbox{$d(i(y),y)<1/(2r)$}, where $i$ denotes the unique orientation reversing isometry mapping $\ell(0)$ to $k(0)$ preserving the first assumption;
        \item\label{cond(iv)} for all $z\in Z$, we have $d(p_\ell(z),p_k(z))<1/(2r)$.
    \end{enumerate}
    For $z\in Z$, by the triangle inequality and the assumptions~\ref{cond(iii)} and~\ref{cond(iv)}
    \begin{align*}
    d(i(p_\ell(z)),p_k(z)) \leq d(i(p_\ell(z)),p_\ell(z))+d(p_\ell(z),p_k(z))<1/r.
    \end{align*}
    Since $i$ is an isometry with $\ell(0)=i(k(0))$, we have $i(\ell(S^+(k,\rho,x)))=k(S^+(k,\rho,x))$.
    Hence, for $u$ in $S^+(\ell,\rho,x)\cap [-r,r]$, letting $z\in Z$ be the point with $p_\ell(z)=\ell(u)$, we have $d(i(p_\ell(z)),p_k(z))<1/r$.
   By assumption~\ref{cond(ii)}, there exists a point  $y\in S^+(k,\rho,x)\cap [-r, r]$ with $k(y)=p_k(z)$. 
   Therefore, it follows that $d(\ell(u),k(y))=d(p_\ell(z),p_k(z))<1/r$, which implies $u\in S^+(k,\rho,x)+[-1/r,1/r]$, or in other words, $S^+(\ell,\rho,x)\cap [-r,r]\subseteq S^+(k,\rho,x)+[-1/r,1/r]$. An analogous argument shows that $S^+(k,\rho,x)\cap [-r,r]$ is contained in $S^+(\ell,\rho,x)+[-1/r,1/r]$, which implies $(S^+(\ell,\rho,x),S^+(k,\rho,x))\in N_r$.
    \end{proof}

    \begin{proof}[{Proof of \Cref{lem:Chaotic Behaviour of geodesic}}\,(2)] 
    Let $r \in \mathbb{R}$ be positive and let $k$ be a given geodesic in $\mathbb{D}$. 
    Recall that the orbit of $(x_{0}, v_{0})$ 
    under the geodesic flow is dense in $\operatorname{ST}(\Sigma_{\Gamma})$, where $x_0$ lies on the lift of $\ell$.
    By this transitivity of $(x_0, v_0)$ and since $Z'=\overline{\mathcal{N}}^+(k([-r,r]),\rho)\cap \Gamma(x)$ is finite, there exists 
    $\gamma\in\Gamma$  
    with $t=\gamma(\ell)$ such that
    \begin{enumerate}[label=(\roman*)]
        \item the hyperbolic segment $k([-r,r])$ is contained in the positive component of $\mathcal{N}(t,\rho)\setminus t$ with respect to the normal bundle of $t$ and $t([-r,r])$ is contained in the positive component of $\mathcal{N}(k,\rho)\setminus k$ with respect to the normal bundle of $k$;
        \end{enumerate}
which allows us to choose $\gamma$ such that
        \begin{enumerate}[label=(\roman*)]
        \setcounter{enumi}{1}
       \item  the set $Z'$ is contained in $\overline{\mathcal{N}}^+(k([-r,r]),\rho)\cap \overline{\mathcal{N}}^+(t([-r,r]),\rho)$;
        \item for all $y\in t([-r,r])$, we have $d(j(y),y)<1/(2r)$, where $j$ denotes the unique orientation reversing isometry mapping $t(0)$ to $k(0)$ preserving the first assumption;
        \item for all $z\in Z'$ we have that $d(p_k(z),p_t(z))<1/(2r)$.
     \end{enumerate}
    Therefore, using arguments identical to those in the proof of \Cref{lem:Chaotic Behaviour of geodesic}\,(1), one may conclude that $(S^+(t,\rho,x),S^+(k,\rho,x))\in N_r$.  Hence, letting  
    $a\in\mathbb{R}$ be such that $\gamma(\ell(a))=t(0)$, it follows that $S^+(t,\rho,x)=S^+(\ell,\rho,x)-a$.
\end{proof}

\begin{proof}[{Proof of \Cref{lem: ap}\,(1)}]
    Let $y,y'\in S^+(\ell,\rho,x)$ be given such that $y\neq y'$.
    Letting $z,z'\in \overline{\mathcal{N}}^+(\ell,\rho)$ denote the points such that $p_\ell(z)=\ell(y)$ and $p_\ell(z')=\ell(y')$, we observe the following inequality
        \begin{align*}
        2\operatorname{inj}(\Gamma,x)\leq d(z,z')\leq d(z,\ell(y))+d(\ell(y),\ell(y'))+d(\ell(y'),z')\leq 2\rho + d(\ell(y),\ell(y')),
        \end{align*}
    which implies that $|y-y'|= d(\ell(y),\ell(y'))\geq 2(\operatorname{inj}(\Gamma,x)-\rho)$.
\end{proof}

\begin{proof}[{Proof of \Cref{lem: ap} (2)}]
    We first prove the result when $\rho>\operatorname{inj}(\Sigma,x)$, and then describe how the proof can be adapted to obtain the result when $\rho=\operatorname{inj}(\Sigma,x)$.
    Let $z,w\in \Gamma(x)$ be 
    such that $d(z,w)=2\operatorname{inj}(\Gamma,x)$.
    Let $\delta\in\mathbb{R}$ be positive and fixed, and let $h=h_{z,w}$ denote the geodesic segment connecting $z$ and $w$. Denote the midpoint of $h$ taken with respect to $d$ by $h_m$. We define $v$ to be the unit normal vector of $h$ at $h_m$. Let $k$ be the geodesic with $k(0)=h_m$ and $k'(0)=v$. By an analogous argument given in the proof of \Cref{lem:Chaotic Behaviour of geodesic}\,(2), given $\epsilon>0$, there exists $\gamma\in \Gamma$ and $a\in\mathbb{R}$ such that $\lvert\gamma'(\ell(a))-v\rvert<\epsilon$ and $\max\{d(\gamma(\ell),z),d(\gamma(\ell),w)\}<\operatorname{inj}(\Gamma,x)+\epsilon$.
    This in tandem with the fact that  $p_k(z)=p_k(w)=k(0)=h_m$, 
    we may choose $\gamma\in\Gamma$ and $a\in\mathbb{R}$ with $\gamma(\ell) \neq k$ and $\max\{d(p_{\gamma(\ell)}(z),p_k(z)),d(p_{\gamma(\ell)}(w),p_k(w))\}<\delta/2$.  Thus, it follows that $0 < d(p_{\gamma(\ell)}(w),p_{\gamma(\ell)}(z))<\delta$. 
    Moreover, since $\gamma$ is an isometry we have that $d(p_{\gamma(\ell)}(w),p_{\gamma(\ell)}(z)) = d(p_{\ell}(\gamma^{-1}(w))$, $p_{\ell}(\gamma^{-1}(z)))$, $d(\gamma(\ell),z) = d(\ell,\gamma^{-1}(z))$ and $d(\gamma(\ell),w) = d(\ell,\gamma^{-1}(w))$, yielding the required result.

    To obtain the result when $\rho=\operatorname{inj}(\Sigma,x)$ we observe that in this setting $d(\gamma(\ell),z)$ or $d(\gamma(\ell),w)$ may be greater that $\rho$. To rectify this issue, we instead of using the vector $v$, we take a small rotation $v^*$ of $v$, and choose a geodesic $t$ such that $t(0)=h_m$ and $t'(0)=v^*$. Since the geodesic flow along the lift of $\ell$ is dense in $\operatorname{ST}(\Sigma_\Gamma)$, there exists $\gamma\in \Gamma$ and $a\in\mathbb{R}$ with $\lvert\gamma'(\ell(a))-v^*\rvert$ and $d(\gamma(\ell(a)),h_m)$ arbitrarily small, and $\gamma(\ell)\neq k$.
    The above argument, for the case when $\rho > \operatorname{inj}(\Sigma,x)$ is now repeated, replacing $v$ with $v^*$ and $k$ with $t$, noting that we can choose $v^*$ such that $d(p_t(z),p_t(w))$ is positive and arbitrarily small, since $v^*$ is not normal to $h$ at $h_{m}$, and $d(\gamma(\ell),z),d(\gamma(\ell),w)<\rho$.
\end{proof}

\begin{proof}[{Proof of \Cref{lem: appp}}]
    For $(y, v) \in \operatorname{ST}(\Sigma_\Gamma)$, 
    let $k_{(y, v)}$ denote the unique geodesic with $k_{(y,v)}(0) = y$ and $k'_{(y,v)}(0) = v$. 
    We define the function $f:\operatorname{ST}(\Sigma_\Gamma)\rightarrow \mathbb{R}$ by $f((y,v))= \inf\{|t|:k_{(y,v)}(t)\in B(\pi(x),\rho)\}$.
    Since all geodesics in $\Sigma_\Gamma$ intersect $B(\pi(x),\rho)$ by assumption, 
    this function is well defined. Moreover, it
    is upper semi-continuous, and since $\operatorname{ST}(\Sigma_\Gamma)$ is compact, the function
    $f$ is bounded. It follows that the set $S^+(\ell,\rho,x)$ is $\varepsilon$-relatively dense for some $\varepsilon>0$.
\end{proof}

\begin{proof}[{Proof of \Cref{lem: apppp}}]
    Assume that $S^+(\ell,\rho,x)$ is periodic with minimal period $\omega\in\mathbb{R}$, and let $k$ be a closed geodesic. \Cref{lem:Chaotic Behaviour of geodesic} (2) implies that $S^+(\ell,\rho,x)$ has the same minimal period as $S^+(k,\rho,x)$. Since $k$ has finite length $l(k)$ we have that $l(k)=m\omega$ for some $m\in\mathbb{N}$. Therefore, the length spectrum $L(\Gamma)=\omega \cdot \mathbb{N}$. 
    However, this is a contradiction to \cite[Proposition\,1.8]{realspectrumcompactificationcharacter}
    namely that the length spectrum of a non-elementary Fuchsian group contains an infinite set of lengths that is independent over $\mathbb{Q}$.
\end{proof}

\end{document}